\newcommand{\dfn}[1]{{\textbf {#1}}}
\newcommand{\leg}{\ensuremath{\Lambda}}
\newcommand{\te}{\tilde{\e}}
\newcommand{\e}{\ensuremath{\eta}} 
\newcommand{\sleg}{\ensuremath{\mathcal{L}}}
\newcommand{\M}{\ensuremath{\mathcal{M}}}
\newcommand{\F}{\ensuremath{\mathcal{F}}}
\newcommand{\Z}{\ensuremath{\mathcal{Z}}}
\DeclareMathOperator{\End}{End}
\DeclareMathOperator{\Aut}{Aut}
\DeclareMathOperator{\Crit}{Crit}
\newcommand{\rr}{\ensuremath{\mathbb{R}}}
\newcommand{\zz}{\ensuremath{\mathbb{Z}}}
\newcommand{\nn}{\ensuremath{\mathbb{N}}}
\theoremstyle{plain}
\newtheorem{thm}{Theorem}[section]
\newtheorem{cor}[thm]{Corollary}
\newtheorem{lem}[thm]{Lemma}
\newtheorem{prop}[thm]{Proposition}
\theoremstyle{definition}
\theoremstyle{remark}
\newtheorem{rem}[thm]{Remark}
\newtheorem{ex}[thm]{Example}
\numberwithin{equation}{section}
\def\dfn#1{{\textbf {#1}}}
\begin{document}

\title{Families of Legendrian Submanifolds via Generating Families}

\date{\today}

\author[J. Sabloff]{Joshua M. Sabloff} \address{Haverford College,
Haverford, PA 19041} \email{jsabloff@haverford.edu} \thanks{JS was
partially supported by NSF grant DMS-0909273.}

\author[M. Sullivan]{Michael G. Sullivan} \address{University of Massachusetts, Amherst 01003} \email{sullivan@math.umass.edu} \thanks{MS was
partially supported by NSF grant DMS-1007260.}

\begin{abstract} We investigate families of Legendrian submanifolds of $1$-jet spaces by developing and applying a theory of families of generating family homologies.  This theory allows us to detect an infinite family of loops of Legendrian $n$-spheres  embedded in the standard contact $\rr^{2n+1}$ (for $n>1$) that are  contractible in the smooth, but not Legendrian, categories.
\end{abstract}

\maketitle


\section{Introduction}
\label{sec:intro}

A central motivating question in contact topology is  the search for the boundary between  flexibility (when contact objects behave like smooth objects) and rigidity (when  behavior is more restrictive).  This search tends to take the form of distinguishing or classifying contact objects up to isotopy.  Phrased in terms of the space of all contact structures on a given manifold, or the space of all Legendrians in a given contact manifold, investigating isotopy classes can be thought of as trying to understand the set of path components.  Flexibility results tend to give information about higher homotopy groups as well as $\pi_0$:   Eliashberg proved, for example, that there is a homotopy equivalence between the space of over twisted contact structures  and the set of smooth $2$-plane distributions on a $3$-manifold \cite{yasha:overtwisted}, and Gromov proved that there is a homotopy equivalence between the space of Lagrangian immersions $L \to (W, \omega)$ and a space of bundle maps $TL \to TW$ \cite{gromov:h-princ}.

Rigidity results for higher homotopy groups are less common.  Bourgeois uses the cylindrical contact homology invariant to construct non-trivial examples of elements in $\pi_m$ of the space of contact structures on unit cotangent bundles of negatively curved manifolds \cite{bourgeois:homotopy}.  K\'alm\'an uses the Chekanov-Eliashberg DGA  invariant to construct a non-trivial example in  $\pi_1$ of the space of Legendrian knots in standard contact $\rr^3 $ \cite{kalman:mono1}. K\'alm\'an's example is especially interesting because his loop of Legendrian knots is contractible as a loop of smooth knots.

In this article, we study the space of Legendrian submanifolds in the $1$-jet space $J^1M$ with its canonical contact structure.  The template for finding nontrivial elements in higher homotopy groups is the same as that used in the rigidity results above:  first, to an object $X$ in the space $\mathcal{X}$, associate some (graded) group $H(X)$ which is an invariant of the path component of $X \in \mathcal{X}$.  Next, to an element $\gamma \in \pi_m(\mathcal{X};X)$, associate an element $\Phi(\gamma) \in \End_{1-m}(H_*(X))$, and attempt to prove that this endomorphism is non-trivial.  In contrast to the results above, which use flavors of the holomorphic-curve-based contact homology, we use the generating family homology as our invariant; see \cite{f-r, lisa:links}. 
Because generating homology is a Morse-theory-based homology, 
the advantage of this choice is two-fold: first, our proofs do not have to deal with the technical analysis of a holomorphic curve theory or the complicated combinatorics of the Chekanov-Eliashberg algebra; and second, families of Morse-theory-based homologies have been elegantly packaged in Hutching's language of spectral sequences \cite{hutchings:families}.

Suppose the Legendrian $\leg \subset J^1M$ has  a generating family $f$  with generating family homology $GH_*(f).$
Let $\sleg$ denote the space of Legendrian embeddings in $J^1M.$  The main technical application of the families framework developed in this article is the following:

\begin{thm} \label{thm:morphism}
There exists a morphism from 
 $\pi_m(\sleg(J^1M), \leg)$ to $\End_{1-m}(GH_*(f))$ if $m>1$, or from a subgroup of $\pi_1(\sleg(J^1M), \leg)$ to $\Aut(GH_*(f))$ if $m=1$.
\end{thm}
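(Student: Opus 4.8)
The plan is to follow the template of the introduction in three movements: turn a based $m$-sphere of Legendrians into a family of generating families, feed that family into Hutchings' spectral sequence \cite{hutchings:families} for families of Morse-type complexes, and read off the desired endomorphism as (essentially) the unique nontrivial differential of that spectral sequence.

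First I would fix a smooth representative $\gamma \colon (S^m, t_0) \to (\sleg(J^1M), \leg)$, viewed as a family $\{\leg_t\}_{t \in S^m}$ of Legendrian embeddings with $\leg_{t_0} = \leg$, and lift it to a \emph{family of generating families} $\mathbf{f} = \{f_t\}_{t \in S^m}$, with each $f_t$ a generating family for $\leg_t$. The existence of such a lift is the parametrized form of the persistence of generating families under Legendrian isotopy developed in the body of this paper: over a contractible parameter space a generating family can be propagated, after fiber stabilization, along any family of Legendrian isotopies. The delicate point is the behavior over the basepoint. When $m > 1$, since $S^m$ is simply connected one can arrange the lift so that $f_{t_0}$ agrees with $f$ up to stabilization — represent $\gamma$ by a map $(D^m, \partial D^m) \to (\sleg(J^1M), \leg)$, propagate $f$ inward from the constant boundary value, and invoke the insensitivity of $GH_*$ to stabilization — thereby obtaining a genuine $S^m$-family whose fiber over $t_0$ computes $GH_*(f)$. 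When $m = 1$ one obtains only a \emph{path} of generating families, terminating at a generating family $f'$ of $\leg$ whose stabilization class the monodromy of $\gamma$ may have altered; since $GH_*$ genuinely depends on that class, one restricts to the subgroup of $\pi_1(\sleg(J^1M), \leg)$ of loops whose generating-family monodromy fixes the stabilization class of $f$ (this is the stabilizer of a point under an action, hence a subgroup), and on that subgroup the construction closes up.

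Second, I would apply Hutchings' machinery \cite{hutchings:families}, in the form adapted to generating families, to $\mathbf{f}$: the associated family over $S^m$ of difference functions and their Morse complexes carries a spectral sequence whose $E_2$-page is $H_p(S^m; \mathcal{GH}_q(\mathbf{f}))$, with the local system $\mathcal{GH}_q(\mathbf{f})$ of generating-family homologies of the fibers trivial — automatically for $m > 1$, and by our choice of subgroup for $m = 1$ — so $E_2^{p,q} \cong H_p(S^m) \otimes GH_q(f)$. This is supported in the columns $p = 0$ and $p = m$ only, so the sole possibly nontrivial differential is $d_m$; after the canonical identifications of the $p = 0$ and $p = m$ columns with $GH_*(f)$, the grading conventions for $GH_*$ make $d_m$ an element of $\End_{1-m}(GH_*(f))$, which we take to be $\Phi(\gamma)$. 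For $m = 1$, $d_1$ is the continuation (monodromy) map, which is an isomorphism, so $\Phi(\gamma) \in \Aut(GH_*(f))$. One then checks that $\Phi$ is well defined — homotopic families produce isomorphic spectral sequences, so $d_m$ is unchanged, and it is independent of the auxiliary choices in the lift — and that $\Phi$ is a morphism: concatenation of based $m$-spheres corresponds to a gluing of the parametrizing families along the basepoint fiber, under which the differentials $d_m$ add when $m > 1$ and compose when $m = 1$, exactly as in Hutchings' framework. These verifications are by inspection.

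The step I expect to be the main obstacle is not this formal skeleton but the two technical inputs it stands on. The first is the parametrized persistence statement itself: propagating, and coherently stabilizing, a single generating family over an entire $S^m$-family of Legendrians, together with a precise accounting of the residual ambiguity over the basepoint (which is exactly what forces the passage to a subgroup when $m = 1$). The second is importing Hutchings' spectral sequence into a setting in which the relevant functions live on \emph{noncompact} fiber bundles: this demands that one impose, and then preserve throughout the whole family, a tameness-at-infinity condition on the generating families (for instance being linear or quadratic at infinity), so that the sublevel-set homologies are finitely generated and the spectral sequence converges. With those in hand, the construction of $\Phi$ and the proofs of its naturality and multiplicativity should be routine.
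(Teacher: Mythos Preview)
Your proposal follows the same architecture as the paper's proof: lift the based $m$-sphere to an $S^m$-family of generating families via the Serre fibration of Theorem~\ref{thm:serre}, feed the family of difference functions into Hutchings' filtered complex, and read off $\Psi_\rho$ from the $d_m$ component. The paper makes this concrete by choosing a Morse function on $S^m$ with exactly two critical points $a,b$ and setting $\psi_\rho(x) = \langle d_m(a,x), b\rangle$ (with a correction $+x$ when $m=1$), then checking from $d^2=0$ that this is a chain map and, via an auxiliary $[-1,1] \times S^m$ family, that it is homotopy-invariant.

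There is, however, a genuine internal inconsistency in your $m=1$ discussion. You assert that the local system $\mathcal{GH}_q(\mathbf{f})$ is trivial ``by our choice of subgroup'', so that $E_2^{p,q} \cong H_p(S^1) \otimes GH_q(f)$, and then define $\Phi(\gamma)$ to be the monodromy of that same local system. These cannot both hold. The subgroup $\pi_1^{gf}$ consists of loops that lift to \emph{loops} of generating families --- this is what is needed for the $S^1$-family to exist at all --- but it does \emph{not} force the monodromy on $GH_*$ to be the identity; indeed, the whole content of $\Psi$ for $m=1$ is that monodromy, and Theorem~\ref{thm:non-trivial} produces loops where it is nontrivial. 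Concretely, with the two-critical-point Morse function on $S^1$, the map $\langle d_1(a,x),b\rangle$ receives contributions from \emph{both} arcs from $a$ to $b$: the arc through the constant region contributes the identity and the other arc contributes the monodromy, so over $\zz/2$ one gets $x + (\text{monodromy})(x)$. This is precisely why the paper inserts the $+x$ correction in the $m=1$ case of Equation~(\ref{eq:key-map}).

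A smaller point: for $m=1$ the paper explicitly observes that the composition law $\Psi_{[\rho][\sigma]} = \Psi_{[\rho]}\Psi_{[\sigma]}$ does not drop out of Hutchings' homotopy-invariance package and instead requires the standard broken-flowline continuation argument from Morse--Floer theory. Your ``by inspection'' at that step is optimistic.
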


For the space of Legendrian submanifolds of $\rr^{2n+1}$, with $n>1$, we find that the morphism is nontrivial.

\begin{thm}  \label{thm:non-trivial}
 There exists an infinite family of Legendrian $n$-spheres in $\rr^{2n+1}$
 such that for each sphere $\leg,$ there exists an element $\alpha \in \pi_1(\sleg; \leg)$
 which is contractible as a smooth loop of spheres but is not contractible in the space of Legendrian submanifolds.
\end{thm}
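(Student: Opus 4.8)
\emph{The plan is to build the examples by ``spinning up'' a one-dimensional model.} First I would set up, in the generating-family language, an infinite family of Legendrian knots $\Lambda^1_k \subset J^1\rr = \rr^3$, each equipped with a generating family $g_k$ and a loop $\alpha^1_k \in \pi_1(\sleg(\rr^3);\Lambda^1_k)$ that (i) lies in the subgroup on which the $m=1$ morphism of Theorem~\ref{thm:morphism} is defined, (ii) induces a nontrivial automorphism of $GH_*(g_k)$, and (iii) is contractible as a loop of smooth knots --- for instance the torus-knot loops in the spirit of K\'alm\'an's example, transported from the Chekanov--Eliashberg DGA to the generating-family setting via the usual dictionary between linearized contact homology and $GH_*$. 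Working with generating families from the outset makes all the later constructions --- spinning and the family machinery --- transparent on the Morse-theoretic side.

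Next I would apply Legendrian front spinning $n-1$ times to obtain Legendrian $n$-spheres $\Lambda_k := \Sigma^{n-1}\Lambda^1_k \subset J^1\rr^n = \rr^{2n+1}$, each carrying the correspondingly spun generating family $f_k$. Spinning is compatible with generating families, and a K\"unneth-type formula should express $GH_*(f_k)$ in terms of $GH_*(g_k)$ and the reduced homology of the $(n-1)$-sphere of rotation; in particular $GH_*(f_k)\neq 0$, these graded groups are pairwise non-isomorphic for distinct $k$ (so the $\Lambda_k$ are pairwise non-Legendrian-isotopic, justifying the phrase ``infinite family''), and the hypothesis $n>1$ supplies the degree shift that the family framework of Theorem~\ref{thm:morphism} needs. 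Spinning the loop $\alpha^1_k$ yields $\alpha_k \in \pi_1(\sleg(\rr^{2n+1});\Lambda_k)$, and one checks that $\alpha_k$ lies in the domain subgroup of the $m=1$ morphism of Theorem~\ref{thm:morphism}, since the requisite fiber-preserving and grading conditions on the spun $S^1$-family of generating families are inherited directly from the one-dimensional loop.

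To show $\alpha_k$ is not Legendrian contractible, I would compute $\Phi(\alpha_k)\in\Aut(GH_*(f_k))$. Using Hutchings' spectral sequence for the $S^1$-family of generating families underlying Theorem~\ref{thm:morphism}, together with naturality of front spinning, the monodromy $\Phi(\alpha_k)$ should be identified, under the K\"unneth/suspension isomorphism, with the one-dimensional monodromy automorphism of $GH_*(g_k)$ tensored with the identity on the homology of the rotation sphere. Since the former is nontrivial and the identification is a functorial isomorphism, $\Phi(\alpha_k)\neq\mathrm{id}$, and Theorem~\ref{thm:morphism} then forces $\alpha_k$ to be nontrivial in $\pi_1(\sleg;\Lambda_k)$.

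Finally, smooth contractibility: because $\alpha^1_k$ bounds a disk in the space of smooth knots in $\rr^3$, spinning that null-homotopy produces a null-homotopy of $\alpha_k$ in $\emb(S^n,\rr^{2n+1})$, the spun maps remaining embeddings throughout (here $n>1$ also gives enough room that no low-dimensional smooth obstruction intervenes). The step I expect to be the main obstacle is the third one: tracking the monodromy finely enough --- through both the spinning construction and the passage from the chain level to the $E^\infty$-page of Hutchings' spectral sequence --- to be certain that $\Phi(\alpha_k)$ is nontrivial \emph{on homology} and not merely on some auxiliary chain-level model; a secondary technical point is verifying carefully that $\alpha_k$ genuinely lands in the domain subgroup of the morphism of Theorem~\ref{thm:morphism}.
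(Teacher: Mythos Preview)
Your approach has a concrete topological error that blocks it from proving the stated theorem: front-spinning a Legendrian knot does not produce a sphere. By the paper's own convention (Section~\ref{ssec:spin}), $\Sigma^m\Lambda$ has diffeomorphism type $\Lambda\times S^m$, so $\Sigma^{n-1}\Lambda^1_k$ is diffeomorphic to $S^1\times S^{n-1}$, not to $S^n$. Since Theorem~\ref{thm:non-trivial} asserts the existence of Legendrian $n$-\emph{spheres}, your construction simply does not produce the required objects. (One could try to phrase a weaker theorem about $S^1\times S^{n-1}$'s, but that is not what is claimed.)

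There is also a genuine gap in step~(ii) of your seed data. K\'alm\'an detects his monodromy on the full Chekanov--Eliashberg DGA; it is not established---and is in fact the heart of the matter---that such a loop acts nontrivially on \emph{linearized} contact homology, let alone on $GH_*$. The ``usual dictionary'' you invoke relates the groups, not the monodromy actions, and linearization can kill exactly the information K\'alm\'an uses. You flag this as ``the main obstacle'' only at the spinning stage, but the difficulty is already present in dimension one and is not addressed by the spectral-sequence machinery.

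For comparison, the paper's argument avoids both issues by building the examples directly in the target dimension rather than bootstrapping from knots. Each $\Lambda^{n,r}$ is a ``dumbbell'': two identical lobes joined by a $0$-handle, with an evident $\zz/2$ symmetry. The loop $\hat\rho$ is a half-rotation of $\rr^n$ that swaps the lobes; it is a loop only because of the symmetry. The monodromy computation is then essentially tautological---$\Psi_{\hat\rho}$ visibly exchanges the two generators $\beta_L,\beta_R$ of $GH_r$---so no delicate chain-level tracking through a spectral sequence is needed. Smooth contractibility is obtained not by spinning a one-dimensional null-homotopy but by invoking Budney's result that the space of long $2$-knots in $\rr^5$ is connected: each lobe can be smoothly isotoped (rel the connecting handle) to a flying saucer, after which the half-rotation is manifestly null-homotopic. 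This last step genuinely uses $n>1$ and has no analogue in your spin-up scheme.
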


We remark that recently a similar map has been announced by Bourgeois and Br\"onnle. Their map counts certain holomorphic curves, and it is unclear if the two maps are related.

In Section \ref{sec:background}, we review generating families and generating family homology. In Section \ref{sec:spectral-sequence}, we review Hutchings' families framework for families of Morse functions, and adapt it to our set-up of generating families. In Section \ref{sec:algebra}, we prove the main results, finishing by rephrasing Theorem~\ref{thm:morphism} in slightly more general terms. In Section \ref{sec:computations}, we apply the families framework in several ways; for example, to computing generating family holomogy of higher dimensional Legendrians via a bootstrap argument, as well as to showing how the morphism in Theorem~\ref{thm:morphism} factors through front-spinning.

\subsection*{Acknowledgements}

We thank Ryan Budney, Dev Sinha, Octav Cornea, and Michael Hutchings for stimulating conversations about the work in this paper; Ryan Budney was especially helpful in clarifying Proposition~\ref{prop:pi1L-smooth}.  The second author also thanks the Centre de Recherches Math\'ematiques of Montr\'eal for its hospitality during the preparation of this paper.

\section{Background Notions}
\label{sec:background}

In this section, we briefly review the notion of a generating family for a Legendrian submanifold and the (Morse theoretic) generating family homology.

\subsection{Spaces of Legendrian Submanifolds}
\label{ssec:sleg}

Let $J^1M$ denote the $(2n+1)$-dimensional 1-jet space of a $n$-dimensional smooth manifold $M.$ We assume that $M$ is closed, or else diffeomorphic to $\rr^n$ outside of a compact set.  The 1-jet space is equipped with the standard contact structure.  Let $\leg \subset J^1M$ be an $n$-dimensional Legendrian submanifold.  We are interested in the topology of the space of Legendrian submanifolds, which is formed by taking the quotient of the function space of Legendrian embeddings by orientation-preserving self-diffeomorphisms of the domain.  The space of submanifolds inherits the quotient topology from the weak $C^\infty$ topology on the function space, as in \cite{hirsch}.  Let $\sleg(J^1M)$ denote this space of submanifolds, and simply denote by $\sleg^n$ the space of local Legendrian submanifolds, i.e.\ $\sleg(\rr^{2n+1})$.

\subsection{Generating Families for Legendrian Submanifolds}
\label{ssec:gf}

Generating families generalize the fact that the $1$-jet of a function $f: M \to \rr$ is a Legendrian submanifold of $J^1M$.  To see how, begin by considering the trivial fiber bundle $M \times \rr^N$ with coordinates $(x,\e)$. A function $f: M \times \rr^N \to \rr$ is a \dfn{generating family} if $0$ is a regular value of the function $\partial_\e f: M \times \rr^N \to \rr^N$.  Denote by $\mathcal{F}$ the set of all generating families.

A generating family yields a Legendrian submanifold as follows: consider the \dfn{fiber critical set} 
$$\Sigma_f = \left\{(x,\e) \in M \times \rr^N\;:\;\partial_\e f(x,\e) = 0 \right\}.$$
The Legendrian submanifold $\leg_f$ defined by $f$ is then the $1$-jet of $f$ along $\Sigma_f$:
$$\leg_f = \left\{(x, \partial_x f(x,\e), f(x,\e))\;:\; (x,\e) \in \Sigma_f \right\}.$$
Said another way, the Cerf diagram for the family of functions $f_x$ parametrized by $x \in M$ is the front diagram for $\leg_f$.  A given Legendrian submanifold $\leg$ may have many different generating families; call that set $\mathcal{F}_\leg$.  

Let $p: \mathcal{F} \to \sleg(J^1M)$ denote the map that sends a generating family $f$ to the Legendrian submanifold $\leg_f$ that it generates.  A key fact for this paper is:

\begin{thm}[\cite{theret:viterbo}] \label{thm:serre}
  The map $p: \mathcal{F} \to \sleg(J^1M)$ is a Serre fibration.
\end{thm}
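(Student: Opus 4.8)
The plan is to verify the homotopy lifting property against cubes $I^k$ directly, exploiting the fact that a generating family over a point of $\sleg$ is essentially unique up to the standard moves (fiber-preserving diffeomorphism, stabilization, and addition of a function pulled back from the base), and that these moves vary continuously in families. Concretely, suppose we are given a continuous family of Legendrians $\Lambda_t$, $t\in I^k$, together with a lift $f_0\in\mathcal{F}$ of $\Lambda_0:=\Lambda_{t}|_{t\in I^{k-1}\times\{0\}}$ to a continuous family of generating families over $I^{k-1}\times\{0\}$; we must extend this to a continuous family $f_t$, $t\in I^k$, of generating families with $\leg_{f_t}=\Lambda_t$. First I would reduce to the local problem: cover $I^k$ by finitely many small boxes on each of which the family $\Lambda_t$ stays within a Weinstein-type neighborhood where things can be trivialized, and patch the local lifts using a partition of unity argument adapted to generating families (here the key point is that the space of generating families for a fixed $\leg$, after stabilization, is contractible in an appropriate sense, so local lifts can be glued).

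The technical heart is the following: given a continuous path (or cube) of Legendrians $\Lambda_t$ and a generating family $f_0$ for $\Lambda_0$, produce a continuous family $f_t$ generating $\Lambda_t$. This is done by first choosing, via a contact isotopy extension theorem, an ambient contact isotopy $\varphi_t$ of $J^1M$ with $\varphi_t(\Lambda_0)=\Lambda_t$ and $\varphi_0=\mathrm{id}$, depending continuously on $t$; one then transports $f_0$ along $\varphi_t$. The point is that a contact isotopy of $J^1M$ acts on generating families: one uses the fact (going back to Chekanov and to Chaperon's and Viterbo's work on generating functions quadratic at infinity) that composing with a contactomorphism changes a generating family by a combination of fiberwise diffeomorphism, stabilization, and addition of a base function, and that this operation can be performed continuously in the parameter $t$ — possibly after increasing the fiber dimension $N$ uniformly over the (compact) parameter space, which is harmless. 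Stabilizing $f_0$ enough at the start ensures there is "room" to perform these moves continuously over the whole cube.

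The main obstacle I expect is precisely the continuity and uniformity of the fiber-dimension bookkeeping: the stabilizations and the ambient contactomorphism moves that realize $f_0 \rightsquigarrow f_t$ might a priori require fiber dimensions that jump as $t$ varies or as one passes between overlapping boxes, and one must show a single $N$ works over the compact cube and that the resulting family is genuinely continuous in the weak $C^\infty$ topology, not merely pointwise. Handling this requires a careful local model: near each $t_0$, write $\Lambda_t$ as the $1$-jet extension of a $t$-dependent deformation and read off the generating family directly, obtaining local continuous lifts that agree up to the controlled moves on overlaps; then a generating-family analogue of the standard "clutching"/patching lemma for fibrations finishes the argument. I would also need the relative version (lift already specified on a face $I^{k-1}\times\{0\}$), which is obtained by the same construction run relative to that face, using that the isotopy $\varphi_t$ can be chosen to be the identity over the face where the lift is prescribed.
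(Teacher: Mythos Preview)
The paper does not actually prove this theorem; it is stated with a citation to Th\'eret \cite{theret:viterbo} and used as a black box. There is therefore no ``paper's own proof'' to compare your proposal against.

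That said, your outline is broadly in line with how the result is established in the literature (Chekanov, Th\'eret, Viterbo): one uses the contact isotopy extension theorem to realize the family $\Lambda_t$ by an ambient contact isotopy $\varphi_t$, and then invokes the persistence theorem for generating families, which says that a compactly supported contact isotopy of $J^1M$ acts on generating families (after a uniform stabilization) continuously in the parameter. Your identification of the main technical issue---uniform control of the fiber dimension $N$ over the compact cube and genuine continuity in the weak $C^\infty$ topology---is exactly the point that requires care, and is what the cited reference handles. The partition-of-unity/patching language you use is somewhat heavier than what is actually needed: once you have a global ambient isotopy $\varphi_t$ (which exists because $I^k$ is contractible), the transport is done in one shot rather than by gluing local lifts, so the ``clutching'' step is unnecessary here.
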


\subsection{Generating Family Homology}
\label{ssec:gfh}

Generating families may be used to define a Morse-Floer-type theory for Legendrian submanifolds; see \cite{f-r, lisa:links} as well as \cite{josh-lisa:cap}.  The definition requires the use of Morse theory on non-compact domains, so we restrict our attention to generating families that are either \dfn{linear at infinity} or \dfn{quadratic at infinity}.  The former (resp.\ latter) condition requires the generating family $f$ to agree with a nonzero linear function $A(\e)$ (resp.\ a non-degenerate quadratic function) outside a compact set in $M \times \rr^N$. If $f$ is linear at infinity, then it may be represented as $f = f_0 + A$, where $f_0$ has compact support and $A$ is linear; the \dfn{support} of $f$ is the support of $f_0.$ From here on, we assume that our functions are linear at infinity.

The first step in the definition of generating family homology is to introduce the \dfn{difference function} on the fiber product of the domain of $f$ with itself:
\begin{align*}
  \delta: M \times \rr^N \times \rr^N &\to \rr \\
  (x, \e, \te) &\mapsto f(x,\te) - f(x,\e).
\end{align*}
The critical points of $\delta$ with positive critical values correspond to the Reeb chords of $\leg_f$, and we capture this geometric information with the following definition of \dfn{generating family homology}:
$$GH_k(f) = H_{N+1+k}(\delta^\omega, \delta^\epsilon; \zz/2),$$
where $\omega$ is a number larger than any critical value of $\delta$ and where there are no critical values of $\delta$ in $(0,\epsilon)$.  It is not hard to prove that the groups $GH_k(f)$ are independent of the choices of $\omega$ and $\epsilon$; see \cite[\S3]{josh-lisa:obstr}.  It is worth noting that $0$ is a critical value for $\delta$ whose critical points form a Morse-Bott submanifold diffeomorphic to the Legendrian itself.  Further, if a generating family $f$ is linear-at-infinity, then, after a fiberwise change of coordinates, so is its difference function $\delta$ \cite{f-r}.  We then define the support of $\delta$ to be the support of $\delta_0$ where $\delta = \delta_0 + A$ with $A$ linear.

The basic invariance property of generating family homology is:

\begin{thm}[Traynor \cite{lisa:links}] \label{thm:invariance}
  If $f_s: [0,1] \times M \times \rr^N$ is a $1$-parameter family of generating families that generate a Legendrian isotopy $\leg_s$, then there exists an isomorphism
  $$\Phi_{f_s}: GH_k(f_0) \simeq GH_k(f_1).$$
\end{thm}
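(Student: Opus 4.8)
The plan is to run a Morse-theoretic continuation argument, adapted to the linear-at-infinity setting, that shows $GH_k(f_s)$ does not change with $s$. First I would perturb the path $\{f_s\}$ slightly to a nearby path $\{f_s'\}$ that is generic in the sense of Cerf theory: being a generating family is an open condition, so $\{f_s'\}$ is again a family of generating families, it may be kept uniformly linear at infinity, and its endpoints are small perturbations of $f_0$ and $f_1$. Invariance of $GH_*$ of a single generating family under small perturbation --- the bifurcation-free special case of the argument below --- then identifies $GH_k(f_0')$ with $GH_k(f_0)$ and $GH_k(f_1')$ with $GH_k(f_1)$, so it suffices to show that $s\mapsto GH_k(f_s')$ is constant. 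Write $\delta_s$ for the difference function of $f_s'$, which, after a fiberwise change of coordinates carried out in the parametrized version of \cite{f-r}, is again uniformly linear at infinity.

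The first key step is to produce \emph{uniform thresholds}: constants $0<\epsilon<\omega$, valid for all $s\in[0,1]$ at once, so that $\omega$ exceeds every critical value of $\delta_s$ and $(0,\epsilon)$ contains no critical value of $\delta_s$. For the upper bound, the linear-at-infinity property together with continuity of the family over the compact interval $[0,1]$ confines all critical points of all $\delta_s$ to a single compact subset of $M\times\rr^N\times\rr^N$, so the critical values are bounded above. For the lower bound, $0$ is a Morse--Bott critical value of each $\delta_s$ with critical manifold $\leg_{f_s'}$, and the remaining positive critical values are the lengths of the Reeb chords of $\leg_{f_s'}$, which for a compact family of embedded Legendrians are uniformly bounded away from $0$. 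In particular the value $0$, and with it the whole critical manifold $\leg_{f_s'}$, always lies strictly below $\epsilon$ and is collapsed in the pair $(\delta_s^\omega,\delta_s^\epsilon)$.

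With $\omega$ and $\epsilon$ fixed, it remains to show that $H_{N+1+k}(\delta_s^\omega,\delta_s^\epsilon;\zz/2)$ is locally constant in $s$. Along any sub-arc avoiding the discriminant, all nonzero critical points of $\delta_s$ are nondegenerate with distinct critical values, and by the choice of thresholds none of these crosses $\omega$ or $\epsilon$; a parametrized gradient flow, using gradient-like vector fields that are standard outside the uniform compact support, then produces homeomorphisms of the pairs $(\delta_s^\omega,\delta_s^\epsilon)$, so the homology is constant there. When the generic path crosses the discriminant it meets either a coincidence of two critical values --- which, since no critical value crosses a fixed threshold, changes nothing about the pair $(\delta_s^\omega,\delta_s^\epsilon)$ --- or a birth or death of a pair of critical points of consecutive Morse index. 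If the common critical value of such a pair is positive, the Morse complex computing $H_*(\delta_s^\omega,\delta_s^\epsilon;\zz/2)$ acquires or loses an acyclic pair of generators, leaving the homology unchanged; if it is negative, both critical points lie below $\epsilon$ and the relative complex is not affected; and by the choice of thresholds the value is never $\omega$ and never lies in $[0,\epsilon]$. Hence the homology is constant on $[0,1]$, which yields the desired isomorphism $\Phi_{f_s}\colon GH_k(f_0)\simeq GH_k(f_1)$.

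The main obstacle is the bookkeeping forced by the non-compactness of the domains: one must verify that the single compact set containing all critical points, the uniform thresholds, and the generic perturbation of the family can all be arranged to respect the linear-at-infinity normal form \emph{uniformly} in $s$, even though the linear part of $f_s$ --- and hence of $\delta_s$ --- varies; this is exactly why the coordinate change of \cite{f-r} must be performed in a family. A secondary, routine point is to carry out the perturbation-invariance for a single generating family without circularity, which is immediate once one observes that a sufficiently small perturbation introduces no bifurcations and moves no critical value across a threshold. Finally, for the applications in later sections one checks that $\Phi_{f_s}$ depends only on the homotopy class of $\{f_s\}$ rel endpoints and is compatible with concatenation --- this follows from the analogous Cerf-theoretic analysis of a generic two-parameter family interpolating between two homotopic paths.
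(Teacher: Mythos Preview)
The paper does not prove this theorem at all: it is stated with attribution to Traynor \cite{lisa:links} and used as a black box, so there is no ``paper's own proof'' to compare against.  Your proposal is, in outline, the standard Morse--Cerf continuation argument one would expect for such a statement, and it is broadly sound.

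A few remarks on the content.  The part of your sketch that genuinely carries the weight is the existence of \emph{uniform} thresholds $\epsilon$ and $\omega$; once these are in place, the bifurcation analysis is routine.  Your justification for the lower threshold $\epsilon$ is correct but worth stating more carefully: the positive critical values of $\delta_s$ are exactly the heights of Reeb chords of the embedded Legendrian $\leg_{f_s'}$, and embeddedness together with compactness of the parameter interval gives the uniform positive lower bound.  In particular, at a birth/death moment the newborn pair of critical points, if they have positive critical value, correspond to a newborn pair of Reeb chords and hence appear with critical value at least $\epsilon$; this is the reason your claim that ``the value \ldots never lies in $[0,\epsilon]$'' is actually true, and it deserves to be said explicitly rather than left to the threshold choice.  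Your treatment of the case where the birth/death value is negative is fine.

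The one place I would push back is the phrase ``a parametrized gradient flow \ldots produces homeomorphisms of the pairs $(\delta_s^\omega,\delta_s^\epsilon)$.''  Away from bifurcations this is correct, but you should say why the flow preserves the pair: since neither $\omega$ nor $\epsilon$ is a critical value for any $s$, the levels $\delta_s^{-1}(\omega)$ and $\delta_s^{-1}(\epsilon)$ are regular and the usual Moser-type vector field (gradient of $\delta$ rescaled, plus $\partial_s$) carries sublevel set to sublevel set; the linear-at-infinity condition, together with the uniform compact support you arranged, is what guarantees this flow is complete.  This is exactly the kind of non-compactness bookkeeping you flag in your final paragraph, and it is the only place the argument is not purely formal.
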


Combining this theorem with Theorem~\ref{thm:serre}, we see that the set of all generating family homologies for a Legendrian submanifold $\leg$ is invariant under Legendrian isotopy.


\section{Hutchings' Spectral Sequence}
\label{sec:spectral-sequence}

We  review  Hutchings' construction in \cite{hutchings:families} of a spectral sequence for smooth families of Morse functions  and submanifolds in the context of generating families.  Up to some small modifications, his constructions and results  apply to difference functions of generating families.
We slightly extend the theory developed in   \cite{hutchings:families}  to include parameter spaces that have non-empty boundary.

Our first task is to set notation for the family of difference functions we plan to analyze using Hutchings' scheme.  Fix $0 < \epsilon \ll 1.$ Let $B$ be a finite-dimensional compact manifold, thought of as a parameter space.  
Unlike in  \cite{hutchings:families}, we allow $B$ to have nonempty boundary. Let $\pi: Z \rightarrow B$ be a fiber bundle whose fiber over $b \in B$ is $Z_b = M \times \rr^N \times \rr^N.$ Let $\delta = \{\delta_b : Z_b \rightarrow \rr\}_{b \in B}$ be a family of smooth functions depending smoothly on $b$ that satisfies:
\begin{description}
\item[Genericity] In the complement of a codimension one subvariety of $B$, all critical points of $\delta_b$ with critical value at least $\epsilon$ are non-degenerate, and
\item[Linear-at-Infinity] Outside a compact set $K$ in $M \times \rr^N \times \rr^N$, $\delta_b$ agrees with a fixed nonzero linear function on $\rr^N \times \rr^N$.\end{description}
Let $\nabla: Z \rightarrow B$ be a connection.

To work with Morse homology in this setting, we need to introduce metrics and gradient flows.  We begin by introducing a Morse-Smale pair $(F^B,g^B)$ on the base space $B$, requiring the additional property that  $\delta_b$ is Morse for all $b \in \Crit(F^B).$ 
If $\partial B \ne \emptyset,$ we assume that the component of the negative gradient flow of $F^B$ with respect to $g^B,$ orthogonal to $\partial B,$ is non-zero and points inward.
Let $W$ be the horizontal lift to $Z$ of this negative gradient flow lifted using $\nabla.$
Let $g^Z$ denote a fiberwise metric on $Z$ and let $\xi$ be the negative fiberwise gradient flow of $\delta_b$ with respect to $g^Z.$ 
Finally, we define the vector field 
\begin{equation}
\label{eq:gradient-field}
V = \xi +W,
\end{equation}
which we will use to define differentials in a spectral sequence.  We label this geometric data by the tuple
\[
\Z  := (Z \rightarrow B, \delta, F^B, V).
\]

The zeroes  of $V$ are pairs $p = (b,x)$, where $b\in B$ is a critical point of $F^B$ and $x \in Z_b$ is a critical point of $\delta_b.$
We will consider two complementary gradings: the base grading $i(b; F^B)$ and the fiber grading $i(x;\delta^b).$ The total grading of a zero $p$ of $V$ is $i(p) = i(b; F^B) + i(x;\delta^b).$ 

Hutchings proves in \cite[Proposition 3.4 and p.\ 461]{hutchings:families} that, generically, the stable and unstable manifolds of the zeroes of $V$ intersect transversally  under a slightly different set-up: his fiber $Z_b$ is compact, his base $B$ cannot have boundary, and  0 is not a degenerate critical value.
Even so, since Hutchings' proof works by examining one pair of non-degenerate critical points at a time, his proof still applies to pairs of critical points with positive critical value in our set-up, with the linear at infinity condition taking the place of compactness. We say that $\Z$ is \dfn{admissible} (over $B$) if the choices above are sufficiently generic so that the stable and unstable manifolds of zeroes of $V$ are transverse.

To make the intersections of the stable and unstable manifolds easier to work with, we set some additional notation. Fix zeroes $p$ and $q$ of $V$. Define $\widetilde{\M}(p,q)$ to be the space of negative flowlines $u \in C^\infty(\rr, Z)$ of $V$, i.e.\ smooth maps $u: \rr \to Z$ that satisfy $\frac{d}{dt}u(t) = -V(u(t))$, with the property that $\lim_{t \to -\infty} u(t) = p$ and $\lim_{t \to \infty} u(t) = q$.  We use this set to define the  \dfn{moduli space of flowlines}
\begin{equation*}
  \M(p,q) = \left.\left\{u \in \widetilde{\M}(p,q) \right\} \right/ \sim \\
\end{equation*}
where $u \sim u'$ if $u(t) = u'(t + \tau)$ for some $\tau \in \rr$.  

\begin{prop}
\label{prop:moduli_space}
For a generic choice of $V$, $\M(p,q)$ is a pre-compact manifold of dimension $i(p) - i(q)$.  The boundary of the compactification is given by:
\begin{equation*}
\partial \overline{\M}(p,q)  = \bigsqcup_{r \in \Crit(V)}\M(p,r) \times \M(r,q)
\end{equation*}
\end{prop}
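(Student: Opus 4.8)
The plan is to establish this statement as an adaptation of the standard compactness-and-gluing package for Morse theory, carried out in the family setting for the vector field $V = \xi + W$. First I would verify the manifold and dimension claims. Since $\Z$ is admissible, the stable manifold $W^s(q)$ and unstable manifold $W^u(p)$ of the zeroes of $V$ intersect transversally; the moduli space $\widetilde{\M}(p,q)$ is identified with $W^u(p) \cap W^s(q)$, so transversality gives it the structure of a manifold of dimension $i(p) - i(q)$, and quotienting by the free $\rr$-action (when $p \ne q$) drops the dimension by one. Here I would be careful that $V$ is not a genuine gradient flow but has the triangular form $\xi + W$: following Hutchings, one observes that flowlines project to flowlines of $-\nabla F^B$ on $B$, which decrease $F^B$, so a flowline from $p = (b,x)$ to $q = (b',x')$ either stays in a single fiber $Z_b$ (when $b = b'$) or strictly decreases base energy; in the fiberwise case one uses that $\delta_b$ is Morse-Smale for $b \in \Crit(F^B)$.

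Next I would address pre-compactness, which is where the linear-at-infinity hypothesis does its work. The base $B$ is compact, and the inward-pointing condition on the negative gradient flow of $F^B$ along $\partial B$ ensures no flowline escapes through the boundary of $B$. In the fiber direction, a flowline of $V$ has energy bounded by $\delta_b(p) - \delta_b(q)$ plus a term controlled by the base energy; the linear-at-infinity condition means that outside the compact set $K$ the function $\delta_b$ has no critical points and a uniformly nonzero gradient in the $\rr^N \times \rr^N$ directions, so flowlines between zeroes (which all lie over $K$ since their critical values are at least $\epsilon$) cannot run off to infinity in the fiber. Standard elliptic/ODE estimates then give a uniform $C^1$ bound, hence pre-compactness of $\M(p,q)$; this is exactly the point where compactness of $Z_b$ in Hutchings' setup is replaced, and I would cite \cite{f-r} for the analogous statement for a single difference function and note that the family version is uniform in $b \in B$ by compactness of $B$.

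The description of $\partial\overline{\M}(p,q)$ is the gluing-and-convergence step, and I expect this to be the main obstacle — not because anything is false, but because it requires the full broken-trajectory analysis adapted to the non-autonomous, non-gradient vector field $V$. The plan is: (i) a sequential compactness argument shows that any sequence in $\M(p,q)$ without a convergent subsequence develops breaking, i.e.\ converges to a broken flowline through some intermediate zero $r$, giving the inclusion $\partial\overline{\M}(p,q) \subseteq \bigsqcup_r \M(p,r)\times\M(r,q)$; here one uses the energy bound to control how many breaks can occur and the fact that $F^B$ strictly decreases along non-constant base-pieces to prevent infinite breaking in the base; (ii) a gluing theorem produces, from each pair $(u_1,u_2) \in \M(p,r) \times \M(r,q)$ with $i(p)-i(q) = 1$, a one-parameter family of honest flowlines in $\M(p,q)$ limiting to the broken one, giving the reverse inclusion and showing $\overline{\M}(p,q)$ is a manifold with boundary. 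For the gluing step I would again follow Hutchings' local-in-a-pair-of-critical-points strategy, which is insensitive to the global geometry and so transfers to our setting verbatim once pre-compactness is in hand; the degenerate critical value $0$ of $\delta_b$ causes no trouble because all zeroes of $V$ relevant to $\M(p,q)$ have critical value at least $\epsilon > 0$ and are bounded away from the Morse-Bott locus. I would remark that, strictly, the statement as phrased is the codimension-one description of the boundary; the higher-codimension corner strata are given by iterated broken flowlines, and one could either state this or restrict attention to $i(p) - i(q) \le 2$, which is all that is needed to define the differential and verify $d^2 = 0$ in the spectral sequence.
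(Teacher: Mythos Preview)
Your proposal is correct and follows essentially the same strategy as the paper: defer to the standard Morse-homology package, using the linear-at-infinity hypothesis to replace compactness of the fiber, and handle $\partial B \neq \emptyset$ separately. The only tactical differences are that the paper packages the non-compactness argument as the Palais--Smale condition (citing \cite{schwarz}) rather than arguing by hand, and deals with the boundary by extending the family over a slightly larger open base $B'$ on which the negative gradient flow still points toward $\partial B$, so that no flowline can escape---functionally equivalent to your direct use of the inward-pointing condition.
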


\begin{proof}
This is a rephrasing of the standard argument in Morse homology.  
Note that even though the space $Z$ need not be compact, the linear-at-infinity condition on $\delta$ means that $V$ satisfies the Palais-Smale condition as set down in \cite[\S2.4.2]{schwarz}.

If $\partial B \ne \emptyset,$ we augment the standard argument as follows.
Extend the family to be over a slightly larger open base manifold $B'$ where the fiber $Z_b$
for $b \in B' \setminus B$ is constant in the direction orthogonal to $\partial B.$
Extend the function $F_B$ to $F_{B'}$ such that for a generic metric $g_{B'}$ which extends
$g_B$, the negative gradient flow projected orthogonally to $\partial B$ points towards $\partial B \subset B'$ in any component
of $B' \setminus \partial B.$
Even though $B'$ is not compact, there are no flow lines starting or ending at any critical point 
that flow into $B' \setminus B$; thus, the usual arguments that show that the moduli spaces are manifolds with corners  from Morse theory, applied to $B$, hold.
\end{proof}

Following Hutchings, the data $\mathcal{Z}$ yield a bigraded chain complex 
\begin{equation} \label{eq:chain-complex} \left(C_{l,m} = C_{l,m}(\Z, \epsilon), d = \sum_{n \ge 0} d_n(\Z)\right),
\end{equation} 
where  
the generators are the critical points $(b,x)$ of $V$ 
with $\delta_b(x) > \epsilon$.  The generator $(b,x)$ has  bigrading $(i(b;F^B), i(x;\delta^b))$. 
The differential $d_n: C_{l,m} \rightarrow C_{l-n, m+n-1}$ counts  flow lines of $V$ with coefficients in $\zz/2$.  Specifically, we define:
\begin{equation} \label{eq:filtered-differential} 
d_n((b,x)) := \sum_{(c,y) \in C_{l-n, m+n-1}} \#\M_0((b,x),(c,y)) (c,y). \\
\end{equation}
That the map $d$ is a genuine differential follows from Proposition~\ref{prop:moduli_space}. We filter the complex $C_\nu := \bigoplus_{l+m = \nu} C_{l,m}$ by the first grading,
$F_l C_\nu := \bigoplus_{l' \le l} C_{l', \nu-l'},$ and let $E^*_{*,*} = E^*_{*,*}(\Z, \epsilon)$ be its associated spectral sequence.

The proof of Theorem~\ref{thm:invariance} applies to the current situation, and implies that the fiberwise generating family homologies
$GH_*(f_b)$ can be assembled into a locally constant sheaf, which we denote by $\F_*(\Z).$ 

\begin{thm} \label{thm:MainPrinciple}
Consider the admissible family of  generating families $\Z = (Z \rightarrow B, \delta, F^B, V).$
\begin{description}
\item[$E^2$ term]  The $E^2$ term of the spectral sequence is
  $$E^2_{l,m} = H_l(\F_m(Z)).$$
\item[Homotopy invariance] If $\Z$ is admissible over $B  \times [0,1]$ with the restrictions
$\Z_0 : = \Z|_{\{0\} \times B}$ and $\Z_1 : = \Z|_{\{1\} \times B}$ also admissible, then there is an isomorphism of spectral sequences
$$E^*_{*,*}(\Z_0) = E^*_{*,*}(\Z_1).$$

On the $E^2$ term, this is the isomorphism 
$$H_l(\F_j(\Z_0)) \cong H_l(\F_j(\Z_1))$$ induced by the isomorphism of  local coefficient systems $$\F_j(\Z_0) \cong \F_j(\Z_1)$$
defined by $\Phi$ in Theorem~\ref{thm:invariance}.
\item[Naturality] If $\phi:B' \rightarrow B$ is sufficiently generic so that $\phi^*\Z$ is admissible, then the pushforward in homology
$$\phi_*:H_*(B';\F_*(\phi^*\Z)) \rightarrow 
H_*(B;\F_*(\Z))$$ extends to a morphism of spectral sequences
$$E^*_{*,*}(\phi^*\Z) = E^*_{*,*}(\Z).$$
\item[Trivialty] If $(\delta_b, \xi_b)$ is Morse-Smale for all $b \in B$, then the spectral sequence collapses at the $E^2$ page.
\end{description} 
\end{thm}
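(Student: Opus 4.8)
The plan is to obtain all four statements by adapting the constructions of \cite{hutchings:families}, the only genuinely new features being the non-compact fibers $Z_b$ (controlled throughout by the linear-at-infinity hypothesis, exactly as in Proposition~\ref{prop:moduli_space}) and the possibility that $B$ has boundary (controlled by the inward-pointing hypothesis on the negative $g^B$-gradient of $F^B$ together with the collar-extension trick from the proof of Proposition~\ref{prop:moduli_space}). I would begin with the $E^2$ term. A rigid $V$-trajectory whose base projection is constant must lie over a single $b\in\Crit(F^B)$, where $W$ vanishes, hence is a rigid $\xi_b$-trajectory; so $d_0$ is the direct sum over $b\in\Crit(F^B)$ of the fiberwise Morse differentials, and $E^1_{l,m}=\bigoplus_{b\in\Crit_l(F^B)}GH_m(f_b)$ is the Morse chain group of $(B,F^B,g^B)$ with coefficients in the fiberwise generating family homologies. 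A rigid $V$-trajectory lowering the base index by one projects to a rigid $-\nabla F^B$-trajectory in $B$ and carries a rigid fiberwise continuation trajectory along it, so $d_1$ is the Morse differential of $B$ twisted by the fiberwise continuation maps; by the proof of Theorem~\ref{thm:invariance} these continuation maps represent the isomorphisms $\Phi$ on the chain level, so the coefficient system is precisely $\F_m(\Z)$ and $E^2_{l,m}=H_l(B;\F_m(\Z))$, as claimed.

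Next I would establish Naturality, and then deduce Homotopy invariance from it. For a generic smooth $\phi:B'\to B$, the fibered-product construction on $-\nabla F^B$-trajectory spaces gives a chain map computing the Morse-theoretic pushforward; tensoring with the fiberwise identity and using the canonical identification $\F_*(\phi^*\Z)=\phi^*\F_*(\Z)$ yields a filtration-preserving chain map $C_{*,*}(\phi^*\Z)\to C_{*,*}(\Z)$, hence a morphism of spectral sequences which on $E^2$ is $\phi_*$ with local coefficients. For homotopy invariance, apply this to the two inclusions $\iota_i:B\times\{i\}\hookrightarrow B\times[0,1]$ ($i=0,1$), whose pullbacks of $\Z$ are the admissible families $\Z_i$: each $\iota_i$ induces a morphism $E^*_{*,*}(\Z_i)\to E^*_{*,*}(\Z)$ which on $E^2$ is the isomorphism $H_*(B;\F_*)\to H_*(B\times[0,1];\F_*)$. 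Since a morphism of bounded spectral sequences that is an isomorphism on some page is an isomorphism on all later pages, one gets $E^*_{*,*}(\Z_0)\cong E^*_{*,*}(\Z)\cong E^*_{*,*}(\Z_1)$; and the composite on $E^2$ is induced by the $[0,1]$-direction parallel transport of the locally constant sheaf $\F_j(\Z)$, which by Theorem~\ref{thm:invariance} applied to the Legendrian isotopy traced out in that direction is exactly the $\Phi$-isomorphism $\F_j(\Z_0)\cong\F_j(\Z_1)$.

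For Triviality, suppose $(\delta_b,\xi_b)$ is Morse--Smale for every $b\in B$, so that no $\delta_b$ has a degenerate critical point of value $\ge\epsilon$ anywhere over $B$. For $n\ge2$, a $V$-trajectory contributing to $d_n$ would contain, over any base trajectory in its image, a fiberwise (continuation-type) trajectory from a critical point of index $m$ to one of index $m+n-1$; since no wall is crossed, the corresponding non-autonomous gradient moduli space is transversally cut out of expected dimension $m-(m+n-1)=1-n<0$, hence empty. Thus $d_n=0$ for $n\ge2$ and the spectral sequence collapses at $E^2$. The step I expect to require the most care is the compactness-with-corners analysis underpinning Naturality (and hence homotopy invariance) in the presence of boundary: the fibered-product moduli spaces over $\phi$ must be cut out transversally and compactified when both $B'$ and $B$ have boundary and the fibers are non-compact, and one must verify that the codimension-one strata of the compactifications are exactly the broken configurations needed for the chain-map and morphism-of-spectral-sequences properties. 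Once the collar-extension argument of Proposition~\ref{prop:moduli_space} is applied along each boundary face, this reduces to Hutchings' analysis, but it is where the adaptation is least automatic.
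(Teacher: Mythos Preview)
Your overall strategy is sound and close in spirit to the paper's, but the route differs. The paper does not re-derive the four properties inside the Morse model as you do; instead it invokes Hutchings' two-stage development: Hutchings first proves all four properties for a spectral sequence built from \emph{singular} chains on the base (his Propositions 4.1, 4.3, 4.6 and Remark 1.5), which works for an arbitrary base, and then proves that the Morse and singular spectral sequences agree over a closed base (his Proposition 6.1). The only new content in the paper's proof is the observation that this last comparison extends to bases with boundary, via the descending-manifold-with-corners structure together with the collar extension of Proposition~\ref{prop:moduli_space}. Routing through the singular model is what makes Naturality and Triviality essentially free in the paper's approach, whereas your direct Morse-theoretic route has to earn them.

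Along your route, the Triviality argument has a genuine gap. You assert that because ``no wall is crossed'' the non-autonomous fiberwise continuation problem over a fixed base trajectory is transversally cut out, hence empty since its expected dimension is $1-n<0$. But the Morse--Smale condition on each $(\delta_b,\xi_b)$ is a statement about the \emph{autonomous} flow at each $b$; it does not force surjectivity of the linearized non-autonomous operator along $\gamma$, so you cannot conclude emptiness from the index alone. What admissibility gives you is transversality of the total $V$-moduli space, a $0$-manifold mapping to the $(n{-}1)$-dimensional base moduli, and such a map can certainly have nonempty image. One repair is to first use the homotopy invariance you have already established to replace $V$ by $\xi+\epsilon W$ and argue adiabatically that for small $\epsilon$ no index-raising fiber trajectories survive; another is simply to pass through the singular model as the paper does, where Triviality is immediate. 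A smaller point: in your Naturality sketch, ``tensoring with the fiberwise identity'' is not quite the construction, since $\phi$ does not carry $\Crit(F^{B'})$ into $\Crit(F^{B})$; the filtered chain map must be built from a genuine fibered-product or continuation count, and that is where the corner analysis you correctly flag actually enters.
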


\begin{proof}
When $\partial B = \emptyset,$ the properties stated in the theorem follow with little or no modifications from Hutchings' arguments.  In outline, Hutchings first establishes the theorem for spectral sequences defined using singular chains in the base (for \emph{any} base); see Propositions 4.1, 4.3, 4.6 and Remark 1.5 in \cite{hutchings:families}.  Hutchings then extends the isomorphism from singular homology to Morse homology in \cite[Section 2.3]{hutchings:families} to an isomorphism of singular spectral sequences and Morse spectral sequences over closed manifold base spaces in \cite[Proposition 6.1]{hutchings:families}.

When $\partial B \ne \emptyset,$ we need to supplement the arguments connecting singular and Morse homology. The key idea in the argument is that the descending manifold of a critical point is a manifold with corners \cite[Equations (2.6) and (2.7)]{hutchings:families}. That these equations extend to the case of a base manifold with boundary comes from repeating the argument given in the proof of Proposition \ref{prop:moduli_space}.
\end{proof}

\begin{rem}
There are several other properties of Hutchings' spectral sequence that we have not included in the theorem above.  The most interesting is a Poincar\'e duality statement, which holds in our set-up for some cases.  In particular, compare \cite[Lemma 7.1]{josh-lisa:cap} with \cite[Proposition 7.1]{hutchings:families}. A more general duality principle for generating family (co)homology is, however, unclear.
\end{rem}

\section{Algebra of Homotopies}
\label{sec:algebra}

In this section, we use the ideas of Section~\ref{sec:spectral-sequence} to investigate the homotopy groups of the space of Legendrian submanifolds.
In Section \ref{ssec:trace}, we discuss how to interpret a family of $n$-dimensional Legendrians $\Lambda_b \subset J^1M,$ parameterized by the $m$-manifold $B,$ as a single $(m+n)$-dimensional Legendrian $\Lambda.$
We also discuss relationships to the generating family homology.
In Section \ref{ssec:main-result}, where $B = S^m$ is a (based) $m$-sphere, 
we interpret Theorem \ref{thm:MainPrinciple}
as a morphism from the based homotopy groups of the space of Legendrian embeddings $\sleg(J^1 M)$
to the space of endomorphisms of generating family homology.
In Section \ref{ssec:dumbbell}, we study this morphism further to find examples of loops of Legendrian embeddings which are non-contractible as Legendrians submanifolds, but contractible as smooth submanifolds.
In Section \ref{ssec:free-homotopies}, we construct a more general morphism from the free homotopy classes of $\sleg(J^1 M).$ 

\subsection{Tracing Families of Legendrian Submanifolds}
\label{ssec:trace}

We  begin by rephrasing the main concept of Section \ref{sec:spectral-sequence} in the language of Legendrian submanifolds and generating family homology.

Let $\leg_b \subset J^1M$ be a smooth family of $n$-dimensional Legendrian submanifolds parameterized by a compact manifold $B$, possibly with boundary. 
Choosing one generating family $f_b$ for one Legendrian $\leg_b$ determines a family of generating families extending $f_b$ (possibly after stabilization) by the uniqueness
of lifting in the Serre fibration of Theorem \ref{thm:serre}.
Define $f:  B \times M \times \rr^N \rightarrow \rr$ and 
$\delta_b: M \times \rr^N \times \rr^N \rightarrow \rr$ by
\begin{equation}
\label{eq:family-generating-family}
f(b,m,\eta) = f_b(m,\eta),  \quad
\delta_b(m,\eta,\tilde{\eta}) =   f_b(m,\eta) -  f_b(m,\tilde{\eta}).
\end{equation}
Let $\leg \subset J^1(B \times M)$ be the $(n+\dim(B))$-dimensional Legendrian {\bf trace}; that is, the front of $\leg$ over the point $b$ is the front of $\leg_b.$
As in Section \ref{sec:spectral-sequence}, let $F^B: B \rightarrow \rr$ be a generic function on the base, let $V$ be the vector field from equation (\ref{eq:gradient-field}), and let
$\Z = (Z \rightarrow B, \delta = \{\delta_b\}_b, F^B, V).$

\begin{lem}
\label{lem:trace}
The function $f$ is a generating family for $\leg.$ If $F^B$ is a sufficiently $C^2$-small Morse function and $\Z$ is admissible, then
\[
GH_k(f) = \bigoplus_{i+j=k+N+1} E^\infty_{i,j}(\Z).
\]
\end{lem}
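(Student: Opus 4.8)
The plan is to prove the two assertions of Lemma~\ref{lem:trace} separately.

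\textbf{Step 1: $f$ is a generating family for $\leg$.} First I would verify the defining regularity condition. With coordinates $(b,m,\e)$ on $B\times M\times \rr^N$, the fiber variable for the family $f$ over the base $B\times M$ is $\e$, so I must check that $0$ is a regular value of $\partial_\e f$. But $\partial_\e f(b,m,\e) = \partial_\e f_b(m,\e)$, and since each $f_b$ is a generating family for $\leg_b$, the partial differential $\partial_\e f_b$ already has $0$ as a regular value fiberwise; smooth dependence on $b$ and the fact that differentiating in $\e$ does not see the $b$-direction then gives regularity of $\partial_\e f$ as a whole. Next I would identify the Legendrian $\leg_f \subset J^1(B\times M)$ generated by $f$: its fiber critical set is $\Sigma_f = \{(b,m,\e) : \partial_\e f_b(m,\e)=0\} = \bigsqcup_{b} \{b\}\times \Sigma_{f_b}$, and the $1$-jet of $f$ along $\Sigma_f$ has $b$-slices equal to the $1$-jet of $f_b$ along $\Sigma_{f_b}$, i.e.\ equal to $\leg_{f_b} = \leg_b$. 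By the definition of the trace (its front over $b$ is the front of $\leg_b$), this is exactly $\leg$. One should also note that if $f_b$ was linear (or quadratic) at infinity, then so is $f$, since the linear-at-infinity behavior is in the $\e$-directions and is uniform in $b$ over the compact base.

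\textbf{Step 2: Identifying $GH_k(f)$ with the $E^\infty$ page.} The difference function of $f$, on $(B\times M)\times\rr^N\times\rr^N$, is $\Delta(b,m,\e,\te) = f(b,m,\te)-f(b,m,\e) = \delta_b(m,\e,\te)$, which is precisely the fiberwise family $\delta=\{\delta_b\}$ appearing in $\Z$. The generating family homology is $GH_k(f) = H_{N'+1+k}(\Delta^\omega, \Delta^\epsilon;\zz/2)$ where $N' = N + \dim B$ is the fiber dimension of $f$ over the point of $B\times M$ (careful bookkeeping of the total fiber dimension, which includes the doubled fiber $\rr^N\times\rr^N$ in the difference function, is needed here — this is where I expect the main accounting subtlety). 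On the other hand, Hutchings' spectral sequence $E^*_{*,*}(\Z)$ is, by construction in Section~\ref{sec:spectral-sequence}, the spectral sequence of the filtered Morse complex of the vector field $V=\xi+W$ built from $\Delta=\delta$ together with a Morse function $F^B$ on $B$; this complex computes the Morse homology of the sublevel-set pair of $\delta$ over $B$, which is the relative homology $H_*(\Delta^\omega, \Delta^\epsilon;\zz/2)$ of the total space. So the plan is: invoke that the filtered Morse complex of $V$ on $Z$ computes $H_*$ of the relevant sublevel set pair of the total difference function — this is the content of Proposition~\ref{prop:moduli_space} together with the standard fact that Morse homology of a manifold-with-corners computes singular homology — and then identify the abutment $\bigoplus_{i+j=\nu}E^\infty_{i,j}(\Z)$ of the spectral sequence with $H_\nu(\Delta^\omega,\Delta^\epsilon)$. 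Setting $\nu = N+1+k$ after unwinding the grading conventions (the $E^2$-page is $H_l(\F_m(Z))$ with $\F_m$ the sheaf of $GH_m(f_b)\cong H_{N+1+m}(\delta_b^\omega,\delta_b^\epsilon)$, so the total degree $\nu$ on the $E$-page corresponds to homological degree $\nu$ via this shift) then yields the claimed formula.

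\textbf{Main obstacle.} The genuine work is not in Step 1 (which is essentially a definition-chase) but in making Step 2 honest: one must confirm that the filtration by the base grading $i(b;F^B)$ really is the filtration whose associated spectral sequence Hutchings constructs, that its abutment is the total homology $H_*(\delta^\omega,\delta^\epsilon;\zz/2)$ rather than something twisted, and — the part requiring genuine care — that the degree shifts match up: the ``$+N+1$'' in the definition of $GH$ for a fiberwise generating family with $N$ auxiliary variables, versus the total shift ``$+(N+\dim B)+1$'' one might naively write for $f$, versus the internal grading conventions of the spectral sequence where the $E^2$ term is $H_l(\F_m(\Z))$. The requirement that $F^B$ be $C^2$-small enters precisely to ensure $\delta_b$ remains Morse near $\Crit(F^B)$ and that the combined vector field $V$ is a small perturbation realizing admissibility, so that no spurious flowlines are introduced; I would cite the genericity/admissibility discussion preceding Proposition~\ref{prop:moduli_space} for this. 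Modulo this bookkeeping, the lemma follows from Theorem~\ref{thm:MainPrinciple} and the standard identification of Morse and singular homology on manifolds with corners.
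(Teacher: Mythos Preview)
Your overall strategy matches the paper's: verify the regularity condition via a submatrix argument, then identify the total homology of the filtered complex $(C_{*,*}(\Z),d)$ with the relative homology of the sublevel-set pair of the difference function of $f$. Two points need correction, however.

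First, the fiber-dimension bookkeeping: the base of $f$ is $B\times M$ and the fiber is $\rr^N$, so the auxiliary dimension is $N$, not $N+\dim B$. Hence $GH_k(f)=H_{N+1+k}(\Delta^\omega,\Delta^\epsilon)$ directly, and matching total degree $i+j=N+1+k$ gives the stated formula with no residual shift to chase.

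Second, and more substantively, you misidentify the role of the $C^2$-smallness hypothesis. Admissibility of $\Z$ is a genericity condition obtainable for \emph{any} $F^B$; that is not what smallness buys you. The issue is that $V=\xi+W$ is gradient-like for $\delta+F^B$, not for $\delta$, so its Morse complex computes $H_*\bigl((\delta+F^B)^\omega,(\delta+F^B)^\epsilon\bigr)$ rather than $H_*(\delta^\omega,\delta^\epsilon)$. The paper's key observation is that when $\|F^B\|_{C^2}$ is small enough, adding $F^B$ does not change the topology of the $\epsilon$-sublevel set (in particular no part of the Morse--Bott locus at $\delta=0$ is pushed above $\epsilon$, and no Reeb-chord critical point is pushed below), so these two relative homologies agree. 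Without this, your sentence ``the filtered Morse complex of $V$ on $Z$ computes $H_*$ of the relevant sublevel set pair of the total difference function'' is unjustified. Once you insert this observation, your argument is complete and coincides with the paper's.
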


\begin{proof}
This result is straightforward after making two observations.  First, in local coordinates, the differential of the fiber derivative of $f$ at $(b,m,\eta)$ contains the differential of the derivative of $f_b$ as a full-rank submatrix.  Thus, $f$ also satisfies the transversality condition for generating families.
Second, the quasi-isomorphism type (which determines its homology) of $CM_*((\delta+F^B)^\omega, (\delta+F^B)^\epsilon)$ is independent
of the choice of generic $F^B$ which makes $\delta$ Morse,
assuming $F^B$ is $C^2$-small, and hence perturbing by $F^B$ does not change the topology of the level $\epsilon$ sublevel set.
\end{proof}

We next consider two examples.
The first will be used in Sections \ref{ssec:main-result} and \ref{ssec:dumbbell}, while the second appears in Section \ref{ssec:free-homotopies}.

\begin{ex}[Based $m$-sphere]
\label{ex:m-sphere}
Let $\Lambda \subset J^1M$ be an $n$-dimensional Legendrian submanifold.
Let $\rho: S^m \rightarrow \sleg(J^1M) $ be a smooth  $S^m$-family of Legendrian submanifolds with the property that for a small contractible neighborhood $U$ of $b \in S^m,$ we have
$\rho(U) = \Lambda.$
Construct a  Morse function $F^{S^m}: S^m \rightarrow \rr$ that has two critical points, a maximum at $a \in U$ and a minimum at $b$.
Assume that $\|F^{S^m}\|_{C^2} < \epsilon$ as in Lemma \ref{lem:trace}.
Let $\Lambda$ be the trace of this $m$-isotopy and define the generating family $f$ for $\leg$ as in Equation~(\ref{eq:family-generating-family}).  If $m=1$, we assume that the application of Theorem~\ref{thm:serre} yields a \emph{loop} of generating families, not just a path.  
Perturb $V$ if necessary so that
\[
\Z = (Z \rightarrow S^m , \delta, F^{S^m}, V)
\]
is an admissible family.
\end{ex}

\begin{ex}[Based homotopy]
\label{ex:based-homotopy}

Let $\Lambda \subset J^1(M)$ be an $n$-dimensional Legendrian submanifold.
Let $\tilde{\rho}: [0,1]^m \rightarrow \sleg(J^1M)$ be a smooth  $[0,1]^m$-family of Legendrian submanifolds
such that $\rho(0, \ldots, 0) = \Lambda.$
Extend $\tilde{\rho}$ to 
${\rho}: I^m:= [-1,1]^m \rightarrow \sleg(J^1M)$ by defining
\[
\rho_{(b_1, \ldots, b_m)} = \tilde{\rho}{(\max(b_1,0), \ldots, \max(b_m,0))}.
 \]
Assume that $\tilde{\rho}|_{\partial [0,1]^{m-1} \times b_m}$ is independent of $b_m.$
Define the Morse function on the base to be:
\begin{equation}
\label{eq:base-function}
F^{I^m}: I^m \rightarrow \rr, \quad F^{I^m}(b_1, \ldots, b_m) = \sigma \sum_{i=1}^m (b_i + 1)^2 (b_i -1)^2,
\end{equation}
where $0 < \sigma \ll \epsilon \ll 1.$
Note that for any metric, the negative gradient of $F^{I^m}$ projects to the outward normal direction on $\partial I^m.$

Let $\Lambda$ be the trace of this $m$-isotopy and define the generating family $f$ and its difference function $\delta$ as in equation (\ref{eq:family-generating-family}).
Perturb $V$ if necessary such that
\[
\Z = (Z \rightarrow I^m , \delta, F^{I^m}, V)
\]
is an admissible family.
\end{ex}

\subsection{From Homotopy Groups of the Space of Legendrians to Generating Family Homology}
\label{ssec:main-result}

We revisit the map $\rho: S^m \rightarrow \sleg(J^1M)$ from Example \ref{ex:m-sphere}, using it to  relate the homotopy groups of $\sleg(J^1M)$ to  morphisms of generating family homology. Specifically, if $f$ is a generating family for $\leg$ and $m>1$, then we will construct a morphism 
\[
	\Psi: \pi_m(\sleg (J^1M); \leg) \to \End_{1-m}(GH_*(f))
\]
If $m=1$, then we restrict the domain of $\Psi$ to the set of homotopy classes of loops in $\sleg (J^1M)$ that lift to loops (not just paths) of generating families; denote by $\pi_1^{gf}(\sleg(J^1M),\leg_0)$ the subgroup associated to those loops.  Note that if a loop in $\sleg (J^1M)$ does not lift to a loop of generating families, then we already know that the loop is non-contractible.

To define the map $\Psi$, we begin by setting notation.  Fix a generating family $f$ for $\leg$ and a small neighborhood $U \subset S^m$ that contains both the  the maximum $a$ and the minimum $b$ of a $C^2$-small function $F^{S^m}$ that has no other critical points. Suppose that $\rho: S^m \rightarrow \sleg(J^1M)$ is a smooth map with the property that $\rho(U) = \leg$.  Construct the generating family $f^\rho$ as in Example~\ref{ex:m-sphere}, recalling that if $m=1$, then we assume that we have a loop of generating families. 

Lemma~\ref{lem:trace} implies that the differential of the generating family chain complex $GC_*(f^\rho)$ in degree $l$ can be written as $d = \sum_{k=0}^{l+1} d_k(\Z)$, as in equations (\ref{eq:chain-complex}) and (\ref{eq:filtered-differential}). For an element $c \in \Crit(F^{S^m}),$ and a generator $(e,p) \in GC_*(f^\rho),$ define  $\langle (e,p),c \rangle$ to be $p \in GC_*(f^\rho_c)$ if $e=c$ and $0$ otherwise.  Extend this pairing bilinearly.
 
Finally, define a map $\psi_\rho: GC_*(f) \rightarrow GC_{*-m+1}(f)$ by:
\begin{equation}
\label{eq:key-map}
\psi_\rho(x) = \begin{cases} \left\langle  d_m(a,x), b \right\rangle + x & m=1, \\
\left\langle  d_m(a,x), b \right\rangle & m>1.\end{cases}
\end{equation}

We can now restate (and prove) Theorem \ref{thm:morphism} is more detail.
\begin{prop}
  \label{prop:pi_k-endo}
	The map $\psi_\rho$ defined above has the following properties:
\begin{enumerate}
\item
The map induces a homomorphism 
$$\Psi_\rho: GH_*(f) \to GH_{*+1-m}(f).$$ 
\item
If $\rho$ and $\rho'$ are homotopic through maps that send $U \subset S^m$ to $\leg_0$, then $\Psi_\rho = \Psi_{\rho'}$.  In particular, given $[\rho] \in \pi_k(\sleg(J^1M), \leg_0)$,  we may refer to the map $\Psi_{[\rho]}$.

\item
The map $\rho \mapsto \Psi_\rho$ induces a morphism from $\pi_m(\sleg(J^1M), \leg_0)$, $m>1$, to $\End_{1-m}(GH_*(f))$ or from $\pi_1^{gf}(\sleg(J^1M), \leg) \to \Aut(GH_*(f))$.  In particular, we have:
\begin{align*}
\Psi_{[\rho][ \sigma]} &= \Psi_{[\rho]}  \Psi_{[\sigma] } & \text{if } m=1, \\
\Psi_{[\rho]+[\sigma]} &= \Psi_{[\rho]}  + \Psi_{[\sigma]} & \text{if } m >1.
\end{align*}
For the $m=1$ case, the equation above implies that $\Psi_{[\rho]}$ is invertible.
\end{enumerate}
\end{prop}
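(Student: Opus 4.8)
The plan is to verify the three items by carefully unwinding the definitions of Section~\ref{sec:spectral-sequence} as they specialize to the families of Examples~\ref{ex:m-sphere}. Throughout, the key structural fact is that for the function $F^{S^m}$ with exactly one maximum $a$ and one minimum $b$, the only base critical points are $a$ (index $m$) and $b$ (index $0$); so the bigraded complex $C_{*,*}(\Z)$ has generators only in base-degrees $0$ and $m$, namely $C_{0,*}=GC_*(f_b)$ and $C_{m,*}=GC_*(f_a)$, and by hypothesis $\leg_a=\leg_b=\leg$ so both copies are literally $GC_*(f)$ (after the identification coming from the contractible neighborhood $U$).

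For item (1), I would first observe that the only components of the total differential that can map $C_{m,*}\to C_{0,*}$ are $d_m$ (a drop of $m$ in base-degree), while $d_0$ restricted to each of $C_{0,*}$ and $C_{m,*}$ is the ordinary Morse/generating-family differential $\partial$ on $GC_*(f)$ (since within a fixed base-critical fiber $F^{S^m}$ is $C^2$-small, so the fiberwise flow is the generating-family flow, by Lemma~\ref{lem:trace} and the argument in its proof). Writing $d^2=0$ in bidegrees and projecting onto the relevant summands yields exactly the chain-map identity $\partial\psi_\rho=\pm\psi_\rho\partial$ for $\psi_\rho$ as defined in~(\ref{eq:key-map}): in the $m>1$ case this is the statement that $d_m:C_{m,*}\to C_{0,*-m+1}$ anticommutes with $d_0$, which is the bidegree-$(l-m,m-1)$ component of $d^2=0$ applied to a generator of $C_{m,l}$ (all intermediate terms $d_k$ with $0<k<m$ vanish for degree reasons since $a$ and $b$ are the only base critical points and no critical point has base index strictly between $0$ and $m$); in the $m=1$ case $d_1$ itself need not be a chain map, but the extra $+x$ in~(\ref{eq:key-map}) corrects this — one checks $d^2=0$ in base-degree-drop $1$ gives $d_0 d_1 + d_1 d_0 + d_0 d_0$-type cross terms together with the count of broken flowlines through the unique base trajectory from $a$ to $b$, which on the nose produces the identity $\partial(\langle d_1(a,\cdot),b\rangle + \mathrm{id}) = (\langle d_1(a,\cdot),b\rangle + \mathrm{id})\partial$. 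Hence $\psi_\rho$ descends to $\Psi_\rho$ on homology.

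For item (2), a homotopy $\rho_t$ rel $U$ gives an admissible family over $S^m\times[0,1]$ in the sense of the Homotopy invariance clause of Theorem~\ref{thm:MainPrinciple}, with $\Z_0,\Z_1$ the traces of $\rho_0,\rho_1$. That clause provides an isomorphism of spectral sequences inducing, on $E^2=H_*(S^m;\F_*)$, the map determined by Theorem~\ref{thm:invariance}; since $\rho_t(U)=\leg$ for all $t$, this identification is the identity on the $GH_*(f)$-summands sitting in base-degrees $0$ and $m$. Tracking what the spectral-sequence isomorphism does to the $d_m$ differential (which is the only differential contributing to $\psi_\rho$), one concludes $\Psi_{\rho}=\Psi_{\rho'}$; alternatively, and perhaps more cleanly, I would build the standard chain-homotopy between $\psi_{\rho_0}$ and $\psi_{\rho_1}$ directly by counting $V$-flowlines in the $S^m\times[0,1]$ family, exactly as in the proof that Morse continuation maps are chain-homotopic. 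For the $m=1$ normalization one also checks that the loop-of-generating-families condition is preserved under homotopy rel $U$, so $\pi_1^{gf}$ is well-defined and $\Psi$ is defined on it.

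For item (3), the multiplicativity/additivity statement, I would use the Naturality clause of Theorem~\ref{thm:MainPrinciple} together with an explicit concatenation model. Represent $[\rho][\sigma]$ (resp.\ $[\rho]+[\sigma]$) by a map $S^m\to\sleg(J^1M)$ obtained by gluing $\rho$ and $\sigma$ along $U$, and choose $F^{S^m}$ on the glued sphere with critical points $a_\rho$ (max), a middle index-? critical point $c$ lying in the overlap region where the family equals $\leg$, and $b_\sigma$ (min) — i.e.\ a Morse function adapted to the decomposition so that the moduli space of flowlines from $a$ to $b$ factors through broken trajectories passing through $c$. The boundary formula of Proposition~\ref{prop:moduli_space} then expresses $\langle d_m(a,x),b\rangle$ in terms of the composition (for $m=1$, where $c$ has index $1-?$... here for $S^1$ the concatenation sphere is again $S^1$ with two max-min pairs, and one passes to a Morse function with a cancelling pair, after which a broken-flowline count gives $\psi_{\rho\sigma}=\psi_\sigma\circ\psi_\rho$ up to chain homotopy) resp.\ the sum (for $m>1$, where the $S^m\vee S^m$ picture makes the $d_m$-count split as a disjoint union over the two hemispheres). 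Finally, for $m=1$: applying the just-proved relation to $[\rho]$ and $[\rho]^{-1}$ and using $\Psi_{[\mathrm{const}]}=\mathrm{id}$ (immediate from~(\ref{eq:key-map}), since for the constant loop $d_1(a,x)$ has no output at $b$) shows $\Psi_{[\rho]}\Psi_{[\rho]^{-1}}=\mathrm{id}=\Psi_{[\rho]^{-1}}\Psi_{[\rho]}$, so $\Psi_{[\rho]}\in\Aut(GH_*(f))$.

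The main obstacle I expect is item (3) in the $m=1$ case: making the concatenation argument rigorous requires choosing the base Morse function and metric on the glued circle so that the flowlines of $V=\xi+W$ counted by $d_1$ genuinely degenerate into the predicted broken configurations passing through the intermediate critical point, and then invoking a gluing/continuation argument to identify the resulting count with the composition $\psi_\sigma\circ\psi_\rho$ up to chain homotopy — this is where one must be careful that admissibility is preserved and that no extra flowlines appear. The other steps are routine once the bigrading bookkeeping (only base-degrees $0$ and $m$ are populated) is set up, but this bookkeeping is exactly what forces all the intermediate $d_k$ to vanish and is worth stating explicitly.
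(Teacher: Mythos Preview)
Your approach mirrors the paper's: the bigrading bookkeeping (only base-degrees $0$ and $m$ are populated, hence $d_k=0$ for $0<k<m$) is exactly how the paper proves part~(1), and your ``alternative'' for part~(2) --- building the chain homotopy directly by counting $V$-flowlines over $I\times S^m$ and extracting $H(x)=\langle d_{m+1}((a,0),x),(b,1)\rangle$ from $d^2=0$ --- is precisely what the paper does. For part~(3) both you and the paper defer to Hutchings for $m>1$ and to the standard continuation/broken-flowline argument for $m=1$; your acknowledgment that this last point is the real obstacle is apt, and the paper likewise does not spell it out.

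One correction: your treatment of the $m=1$ case in part~(1) is muddled. The map $x\mapsto\langle d_1(a,x),b\rangle$ \emph{is} already a chain map, by exactly the same relation $d_0d_1+d_1d_0=0$ (over $\zz/2$) that works for every $m$; there are no ``$d_0d_0$-type cross terms'' in base-degree-drop~$1$, since $d_0^2$ drops base-degree by~$0$. The extra $+x$ in~(\ref{eq:key-map}) is not there to repair a failure of the chain-map property --- adding the identity to a chain map is trivially still a chain map --- but rather so that the constant loop (for which the two base flowlines from $a$ to $b$ cancel mod~$2$, giving $\langle d_1(a,x),b\rangle=0$) is sent to $\mathrm{id}$, and hence $[\rho]\mapsto\Psi_{[\rho]}$ lands in $\Aut(GH_*(f))$ rather than merely $\End$.
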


\begin{proof}
 The general principle of this proof is outlined in \cite{hutchings:families}.
  For the convenience of the reader, we present some of the details here when considering generating families.

To prove the first property, note that $d^2(c,x) = 0$ if and only if  
$\langle d^2(c,x), e \rangle = 0$ 
for all $e \in  \Crit(F^{S^m}).$ Since the base function $F^{S^m}$ has critical points of index $0$ and $m$ only, we see that $d_k =0$ unless $k=0,m$.  In particular, for all $x \in \Crit(\delta_a),$ we have:
\begin{eqnarray*}
0 & = & \langle d^2 (a, x), b \rangle \\
& = & \langle (d_0 d_{m} + d_{m} d_0) (a, x), b \rangle.
\end{eqnarray*}

Thus, $\psi_\rho$ is a chain map and induces 
a map
$$\Psi_\rho: GH_*(f^\rho_a) \to GH_{*+1-m}(f^\rho_b).$$

Next, we take two homotopic maps $\rho, \rho': S^m \to \sleg(J^1M)$ with admissible data $\Z$ and $\Z',$ respectively.  
Combining Examples \ref{ex:m-sphere} and  \ref{ex:based-homotopy}, we construct an admissible $\Z[-1,1]$ over $I \times S^m = [-1,1] \times S^m$ such that 
$\Z|_{-1} = \Z = \Z|_0$ and $\Z|_1 = \Z'$.
We then apply Lemma \ref{lem:trace} to define $d = d(\Z[-1,1]).$
There are six critical points of $F^{I \times S^m}$, which we denote by $(n, c)$ where $n \in\{ -1,0,1\}$ and $c \in \{a,b\}.$ Since the base indices lie in the set $\{0,1, m, m+1\}$, the equation $d^2=0$ now implies:
\begin{equation} \label{eq:d2-homotopy}
0 =  \langle (d_0 d_{m+1} + d_{m+1} d_0 + d_1 d_m + d_m d_1) ((0,a), x), (1,b) \rangle.
\end{equation}

Since we are working with a \emph{based} homotopy between $\rho$ and $\rho'$, the map $d_1$ corresponds to the identity map; in particular, we have:
\[
d_1((c,0), x) =((c,1),x) + ((c,-1), x)
\]
for $c \in \{a,b\}$ and  $x \in \Crit(\delta_{(c,0)}) =  \Crit(\delta_{(c,\pm 1)}).$
Thus, Equation~(\ref{eq:d2-homotopy}) indicates that the map $H: GC_*(f^\rho_{(a,0)}) \to GC_{*-m+2}(f^\rho_{(b,1)})$ defined by
\[
H(x) =\left\langle  d_{m+1}((a,0),x), (b,1) \right\rangle,
\]
is a chain homotopy between $\psi_\rho$ and $\psi_{\rho'}$.

The proof of the third statement for $m \geq 2$ essentially appears in \cite[Example 1.9]{hutchings:families}, as Hutchings' proof relies on a  based homotopy similar to the one we  just explicitly constructed.  

For $m=1$, we are unaware how to apply Theorem \ref{thm:MainPrinciple} to prove that $\Psi_{[\rho][\rho']} = \Psi_{[\rho]}  \Psi_{[\rho']}.$ 
Instead, this follows from the traditional ``broken-curves" argument of the more well-studied continuation methods in Morse/Floer theory. 
\end{proof}

\subsection{
A constructive proof of Theorem~\ref{thm:non-trivial}
}
\label{ssec:dumbbell}

In this section, we prove Theorem~\ref{thm:non-trivial}, namely that for every $n>1$, there is an infinite family of Legendrian submanifolds, $\leg^{n,r} \subset \rr^{2n+1}$ parametrized by $r \in \nn$ so that $\pi_1(\sleg^n, \leg^{n,r})$ is non-trivial.  Further, the non-trivial homotopy classes we produce in $\pi_1(\sleg^n, \leg^{n,r})$ are trivial in the smooth category.

We begin by constructing $\leg^{n,r}$.  Consider the Legendrian link in $\rr^3$ whose front projection appears in Figure~\ref{fig:surface-ex-1}.  This link, which is isotopic to the Hopf link, has a generating family $f: \rr \times \rr^N \to \rr$ with the the top strand of the top component generated by critical points of index $r+N$ and the bottom strand of the bottom component generated by critical points of index $N-1.$  Spin the front about its central axis into $\rr^{n+1}$ as in \cite{golovko:higher-spin} to get two Legendrian spheres. Then perform a $0$-surgery along the horizontal dotted $1$-disk in Figure~\ref{fig:surface-ex-1} to get a connected Legendrian sphere $\tilde{\leg}^{n,r}$. That the spinning and surgery constructions yield Legendrian surfaces with generating families is a simple generalization of facts proven in \cite{bst:construct}.

\begin{figure}
  \centerline{\includegraphics{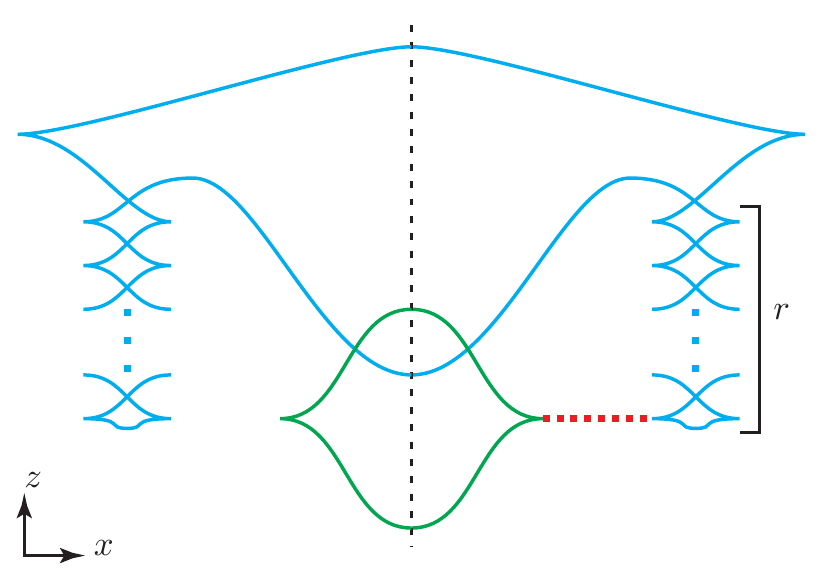}}
  \caption{By spinning this front around the central $z$ axis and then performing a $0$-surgery along the dotted red disk, we obtain the Legendrian surface $\tilde{\leg}^{2,r}.$}
  \label{fig:surface-ex-1}
\end{figure}

To construct $\leg^{n,r}$ itself, we take two copies of $\tilde{\leg}^{n,r}$, positioned sufficiently far apart along the $x_1$ axis so that the pair can be generated by a  single generating family that is equal to a linear function in $\eta$ in a neighborhood of the hyperplane $x_1 = 0$; see \cite[\S3.3]{josh-lisa:cap}.  Finally, perform another $0$-surgery to connect the two copies; once again, the result has a generating family which we will call $f^{n,r}$.

It is important that the three $0$-surgeries performed thus far line up as in Figure~\ref{fig:surface-ex-2}. For $r \geq n+2$, it is straightforward to use the cobordism long exact sequence of \cite{josh-lisa:cap} (see also \cite{bst:construct}) to compute that the generating family homology with respect to the generating family $f^{n,r}$ is:
$$GH_m(f^{n,r}) = \begin{cases} \zz/2 & m = n, \\ \zz/2 \oplus \zz/2 & m=r,1-r \\ 0 & \text{otherwise.} \end{cases}$$
It is easy to see from the computation that the group $GH_r(f^{n,r})$ is generated by two chains $\beta_L$ and $\beta_R$, each of which is arises from a sum of critical points that lie in exactly one of the copies of $\tilde{\leg}^{n,r}$.

\begin{figure}
  \centerline{\includegraphics{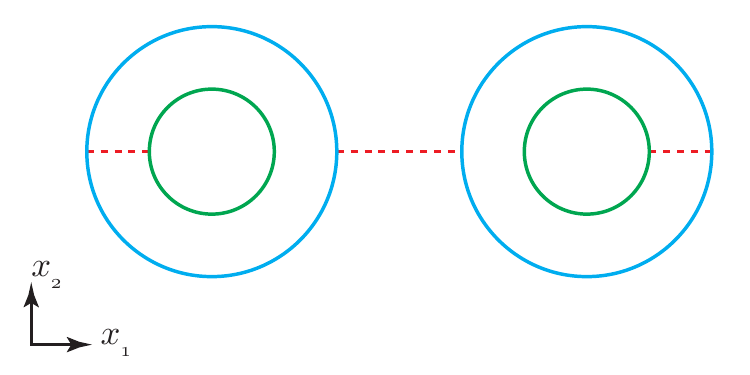}}
  \caption{The three $0$-surgeries in the construction of $\leg^{2,r}$ must line up as in the figure.}
  \label{fig:surface-ex-2}
\end{figure}

With the Legendrian spheres $\leg^{n,r}$ in hand, we proceed to construct a non-contractible loop in $\sleg^n$ based at $\leg^{n,r}$.  The idea is to effect a rotation by $\pi$ in the first two coordinates of the base manifold $\rr^n$, which yields a \emph{loop} in $\sleg^n$ because of the symmetry of $\leg^{n,r}$.  To be more precise, fix $\tau \ll 1$ and choose a smooth function $\sigma: [0,2\pi] \to [0,\pi]$ with the properties that $\sigma$ is non-decreasing, $\sigma^{-1}\{0\} = [0,\tau]$, and $\sigma^{-1}\{\pi\} = [\pi-\tau, 2\pi]$.  Define a path $\rho: [0,2\pi] \to SO(n)$ of rotations of the base $\rr^n$ to be the identity except for the following elements of $SO(2)$ in the upper left corner:
\[
	\begin{bmatrix}
		\cos \sigma(s) & \sin \sigma(s) \\ -\sin \sigma(s) & \cos \sigma(s)
	\end{bmatrix}.
\]
Finally, let $f_s = f^{n,r} \circ \rho(s)$, where we have implicitly extended $\rho$ to be the identity on the fiber component.  The symmetry of the function $f^{n,r}$ implies that this is actually a smooth family of generating families over the base $S^1$ even though $\rho$ does not descend to a smooth function on $S^1$.  In particular, we obtain a smooth loop $\hat{\rho}$ of Legendrian spheres in $\sleg^n$.

To place the construction above in the families context, note that the construction above yields a (trivial) bundle $Z = S^1 \times \rr^n \times \rr^{2N}$ over $S^1$, a fiber-wise difference function $\delta_s$, and a base function $F^B$ as constructed in Section~\ref{ssec:main-result} with maximum at $0$ and minimum at $\pi$.  It remains to specify a vector field $V$.  Choose any metric on the base circle and let $W$ be the lift of $\nabla F^B$ to $Z$ via the trivial connection. Let $\xi_0$ be the fiber-wise gradient of $\delta_0$, and define	
\begin{equation} \label{eq:xi}
	\xi_s(x) = W(s) \rho'(s) + \rho(s) \xi_0(x).
\end{equation}
Finally, as in Section~\ref{sec:spectral-sequence}, we define the vector field $V$ to be $V(x,s) = \xi_s(x) + W(s)$.  Thus, we have all of the data necessary to form a tuple $\Z$ for use in the families construction.

\begin{prop} \label{prop:pikL}
  The loop $\hat{\rho}$ based at $\leg^{n,r}$ is not contractible in $\sleg^n$.
\end{prop}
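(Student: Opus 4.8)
The plan is to feed the loop $\hat\rho$ into the machinery of Proposition~\ref{prop:pi_k-endo} and show that the resulting automorphism $\Psi_{[\hat\rho]} \in \Aut(GH_*(f^{n,r}))$ is \emph{not} the identity. Since $\Psi$ is a homomorphism with domain $\pi_1^{gf}(\sleg^n,\leg^{n,r})$, this immediately implies $[\hat\rho] \neq 0$, i.e.\ that $\hat\rho$ is non-contractible. Note first that $\hat\rho$ does lie in $\pi_1^{gf}$: by construction it is covered by the honest \emph{loop} of generating families $\{f_s = f^{n,r}\circ\rho(s)\}_{s\in S^1}$, so there is no issue lifting it. Thus the content is entirely in computing $\Psi_{[\hat\rho]}$.

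To compute $\Psi_{[\hat\rho]}$, I would unwind the definition~(\ref{eq:key-map}) of $\psi_{\hat\rho}$ against the explicit data $\Z = (Z\to S^1,\delta,F^B,V)$ assembled just before the statement. The base function $F^B$ has its maximum at $s=0$ and its minimum at $s=\pi$, so there are exactly two negative gradient trajectories of $F^B$ from $0$ to $\pi$, one through each arc of $S^1$. The arc $s\in[\pi,2\pi]$ lies entirely in the region where $\sigma\equiv\pi$; there, since $f^{n,r}$ is invariant under the rotation by $\pi$ in its first two base coordinates, we have $f_s\equiv f^{n,r}$ and hence $\delta_s\equiv\delta_0$, so the family is autonomous over this arc and its contribution to $d_1$ is the identity chain map (cf.\ the triviality clause of Theorem~\ref{thm:MainPrinciple}). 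Over the other arc $s\in[0,\pi]$ the family is $\delta_s(m,\eta,\tilde{\eta}) = f^{n,r}(\rho(s)m,\eta) - f^{n,r}(\rho(s)m,\tilde{\eta})$, the family of difference functions obtained by pulling $f^{n,r}$ back along the rotations $\rho(s)$, and its contribution to $d_1$ is the continuation chain map $\Phi$ of this path of generating families. Reading off~(\ref{eq:key-map}) in the $m=1$ case, the ``$+x$'' term cancels the identity contribution of the trivial arc mod $2$, leaving $\Psi_{[\hat\rho]} = [\Phi]$ on $GH_*(f^{n,r})$.

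It remains to identify $[\Phi]$ and see that it is nontrivial. The path $\{f_s\}_{s\in[0,\pi]}$ is obtained from the single generating family $f^{n,r}$ by precomposition with the path of base diffeomorphisms $\rho(s)$, which runs from the identity to the rotation by $\pi$; by naturality of continuation maps for such ambient-isotopy paths, $[\Phi]$ is exactly the automorphism of $GH_*(f^{n,r})$ induced by this rotation by $\pi$ (which, as noted, is a symmetry of $f^{n,r}$). Now recall that $GH_r(f^{n,r}) = \zz/2\langle\beta_L\rangle \oplus \zz/2\langle\beta_R\rangle$, where $\beta_L$ and $\beta_R$ are supported in the two copies of $\tilde{\leg}^{n,r}$. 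These copies sit symmetrically on opposite sides of the hyperplane $x_1=0$ and are interchanged by the rotation by $\pi$; consequently $[\Phi]$ interchanges $\beta_L$ and $\beta_R$. Therefore $\Psi_{[\hat\rho]}(\beta_L) = \beta_R \neq \beta_L$, so $\Psi_{[\hat\rho]}$ is not the identity and $\hat\rho$ is not contractible in $\sleg^n$.

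The step I expect to be the main obstacle is the identification of the $d_1$-contribution over the nontrivial arc with the continuation map of the rotated family, carried out for the \emph{specific} vector field~(\ref{eq:xi}) rather than a generic gradient-like field: indeed, the conjugation formula~(\ref{eq:xi}) is precisely what should force the flowlines over that arc to be the ``obvious'' ones realizing the rotation. A clean alternative, in the spirit of the last paragraph of the proof of Proposition~\ref{prop:pi_k-endo}, is to bypass the spectral-sequence bookkeeping here and argue directly, via the standard broken-trajectory/continuation argument, that $\Psi_{[\hat\rho]}$ is the composite of the continuation maps traversing the loop, which is the rotation-by-$\pi$ automorphism. The remaining ingredients --- the form of $GH_*(f^{n,r})$ and the localization of $\beta_L,\beta_R$ --- are already established in the construction preceding the statement.
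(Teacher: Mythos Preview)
Your proposal is correct and follows essentially the same approach as the paper: show that $\Psi_{[\hat\rho]}$ is not the identity by exploiting the specific vector field~(\ref{eq:xi}) to see that the rigid flowlines realize the rotation by $\pi$, which swaps $\beta_L$ and $\beta_R$ in degree $r$. Your arc-by-arc continuation framing unpacks the paper's terse flowline decomposition $\gamma_M(t)=\rho(\gamma_S(t))\zeta(t)$, and that conjugation formula is exactly what fills the step you correctly flagged as the main obstacle.
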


\begin{proof}
It suffices to show that $\Psi_{\hat{\rho}}$ is not the identity.  

The vector field $V$ constructed above is designed so that a flow line $\gamma(t) = (\gamma_M(t), \gamma_S(t))$ has the following properties:
\begin{enumerate}
\item The component  $\gamma_S(t)$ satisfies the decoupled one-dimensional equation $\gamma_S'(t) = W(\gamma_S(t))$.
\item The component $\gamma_M(t)$ is of the form $\gamma_M(t) = \rho(\gamma_S(t))\zeta(t)$ for some flow line $\zeta(t)$ of the vector field $\xi_0$.  This fact is a straightforward consequence of Equation~(\ref{eq:xi}).
\end{enumerate}
It is then clear that the rigid flow lines that compute the map $\Psi_{\hat{\rho}}$ on $GH_*(f^{n,r})$ send a class of $GH_*(f^{n,r})$ represented by critical points with $x_1 <0$ to the symmetric class represented by critical points with $x_1 > 0$.  By construction, this map is not the identity in degree $r$, and hence the loop $\hat{\rho}$ is not contractible.
\end{proof}

While the loop $\hat{\rho}$ is non-trivial in $\pi_1(\sleg^n,\leg^{n,r})$, it is smoothly trivial. More precisely, we have:

\begin{prop} \label{prop:pi1L-smooth}
  The loop $\hat{\rho}$ is null-homotopic in the space of smooth embedded $n$-spheres in $\rr^{2n+1}$.
\end{prop}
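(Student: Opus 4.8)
The plan is to analyze the loop $\hat{\rho}$ directly in terms of the construction of $\leg^{n,r}$ and the rotation path $\rho$. Recall that $\hat{\rho}$ arises from the path $\rho \colon [0,2\pi] \to SO(n)$ which is the identity outside the upper-left $SO(2)$-block, where it is a rotation by $\sigma(s) \in [0,\pi]$; because $\leg^{n,r}$ is symmetric under the rotation by $\pi$ in the first two base coordinates (this $\pi$-symmetry is exactly what made $\hat{\rho}$ a loop rather than a path), the endpoint $\rho(2\pi)$, rotation by $\pi$, acts on $\leg^{n,r}$ as the identity submanifold. So as a loop of smooth embeddings $S^n \hookrightarrow \rr^{2n+1}$, $\hat{\rho}$ is the loop $s \mapsto R(\sigma(s)) \cdot \leg^{n,r}$, where $R(\theta) \in SO(n) \subset SO(2n+1)$ acts on $\rr^{2n+1} = J^1\rr^n$ through its action on the base $\rr^n$ (extended to the jet coordinates in the standard contactomorphic way).

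First I would reduce to a statement about $SO(2)$. The path $\sigma$ traces out a half-turn, and the loop in the smooth embedding space is the composite of $s \mapsto R(\sigma(s))$ with the orbit map $SO(2) \to \emb(S^n, \rr^{2n+1})$, $A \mapsto A \cdot \leg^{n,r}$, where we only use the half-circle $\{R(\theta) : \theta \in [0,\pi]\}$ and the two endpoints map to the \emph{same} embedded submanifold (not the same parametrized embedding — this is why we work in the space of submanifolds, or equivalently quotient by $\diff^+(S^n)$). The key point is that a half-turn rotation in an $\rr^2 \subset \rr^n$, composed with the symmetry swap of the two copies of $\tilde{\leg}^{n,r}$, is isotopic rel nothing to a full rotation that visibly bounds a disk. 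Concretely: the $\pi$-rotation $R(\pi)$ restricted to $\leg^{n,r}$ equals an ambient diffeomorphism isotopic to the identity (rotate the $\rr^2$-factor the rest of the way around, a further $\pi$, using that $\pi_1(SO(2)) = \zz$ is generated by the full $2\pi$ loop which acts trivially up to isotopy on any fixed embedding since $SO(2)$ acts smoothly). Thus concatenating $\hat{\rho}$ with the second half-turn $s \mapsto R(\pi + \sigma(s)) \cdot \leg^{n,r}$ gives the full-turn loop $s \mapsto R(\cdot) \cdot \leg^{n,r}$ based at $\leg^{n,r}$, which is null-homotopic in the space of smooth embedded spheres because it extends over a $2$-disk (indeed over all of $SO(2) \simeq S^1$ bounding a disk only if the $S^1$-action is null — more carefully, the orbit map $S^1 = SO(2) \to \emb(S^n,\rr^{2n+1})/\diff^+$ is itself null-homotopic because the $S^1$-action on $\rr^{2n+1}$ extends to a $\cc^*$- or at least a contractible-group action, or because the full-turn loop is the boundary of the disk of rotations by angles shrinking to $0$). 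Then $\hat{\rho}$ is homotopic to the second half-turn run backwards, and the second half-turn is itself homotopic to $\hat{\rho}$ by the $\pi$-symmetry of $\leg^{n,r}$; combining, $\hat{\rho}^2$ is null-homotopic, and then a more careful version of the disk-filling argument shows $\hat{\rho}$ itself is null-homotopic.

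I expect the main obstacle to be making the last reduction honest: showing not merely that $\hat{\rho}^2$ (or some multiple) is smoothly trivial but that $\hat{\rho}$ itself is. The clean way is to exhibit a smooth map $D^2 \to \emb(S^n, \rr^{2n+1})/\diff^+(S^n)$ whose boundary restriction is $\hat{\rho}$. The natural candidate for such a disk comes from interpolating the rotation path $\rho$ through $SO(n)$: since $n > 1$, $SO(n)$ is connected and we have room to contract the $SO(2)$-half-loop using a third coordinate — more precisely, $\pi_1(SO(n))$ has order $2$ for $n \ge 3$, and the loop $\rho$ descends to a loop in $SO(n)$ only via the \emph{action} on $\leg^{n,r}$, not as an honest loop in $SO(n)$, so one must track the combined motion of (rotation) $\times$ (copy-swap). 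Here is where Budney's help (acknowledged in the introduction) presumably enters: the relevant fact is that the ``rotate-and-swap'' loop in the space of embedded spheres is detected by nothing in the smooth category because one can first slide the two copies of $\tilde{\leg}^{n,r}$ around each other in $\rr^{2n+1}$ (there is plenty of room, $2n+1 \ge 5$) to undo the swap, and then the residual rotation is an honest element of $\pi_1(SO(2))$ acting on a single sphere, which is trivial in $\pi_0 \emb / \diff^+$ after $2\pi$. I would organize the writeup as: (1) identify $\hat{\rho}$ with the rotate-and-swap loop; (2) use an ambient isotopy supported near the connecting $0$-surgery handle to replace the swap of the two copies by a half-rotation of the handle, converting $\hat{\rho}$ into an honest loop of rotations $s \mapsto R(\sigma(s) / 2)\cdot \leg^{n,r}$ — wait, rather into a $\pi$-rotation loop that is now a genuine loop in the $SO(2)$-orbit; (3) fill that loop by the disk of rotations $R(t\,\sigma(s)/\pi \cdot \pi)$, $t \in [0,1]$, contracting to the constant loop at $\leg^{n,r}$. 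Steps (2)–(3) are the content; step (2), the ambient isotopy trading the discrete swap for a continuous half-turn, is the delicate part and is exactly the point on which Budney's input is credited.
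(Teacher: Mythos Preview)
Your argument has a genuine gap that you yourself identify: you only establish that $\hat{\rho}^2$ is smoothly null-homotopic, not $\hat{\rho}$ itself. Your proposed fixes (sliding the two copies of $\tilde{\leg}^{n,r}$ around each other, or trading the swap for a half-rotation of the handle) are suggestive but not worked out, and it is not clear they can be made rigorous without substantially more input. Moreover, your justification for the full-turn loop being null is incorrect as written: $\cc^*$ is not contractible, and the ``disk of shrinking rotations'' $R(t\cdot 2\pi s)$ is not a based homotopy (at $s=1$ it traces out the full loop again, not the basepoint). The full-turn claim does happen to be true, since the induced rotation on $J^1\rr^n$ rotates \emph{two} coordinate $2$-planes simultaneously and the diagonal $SO(2)\hookrightarrow SO(2n+1)$ therefore represents twice the generator, hence zero, in $\pi_1(SO(2n+1))=\zz/2$; but this still only yields $\hat{\rho}^2\simeq *$, and nothing in your outline bridges from there to $\hat{\rho}\simeq *$.

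The paper's argument is entirely different and sidesteps the order-$2$ problem. Rather than analyzing rotations in $SO(n)$, it simplifies the submanifold: Budney's result that the space of long $2$-knots in $\rr^5$ is connected furnishes a smooth isotopy $H$ from $\leg^{2,r}$ to a flying saucer, carried out symmetrically on the two lobes so that $H$ commutes with the $\pi$-rotation throughout. One then homotopes $\hat{\rho}$ by, at stage $t$, running $H$ for time $t/3$, then rotating by $\pi$, then running $H^{-1}$. At $t=1$ the rotation is applied to the flying saucer, on which it is literally the identity; the loop has become $H$ followed by $H^{-1}$, which is manifestly null. This produces a direct null-homotopy of $\hat{\rho}$, with no detour through $\hat{\rho}^2$. (The case $n>2$ follows by spinning.) This is where Budney's input actually enters: not in manipulating the swap, but in furnishing the isotopy to a rotationally symmetric model.
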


\begin{proof}
  For $n=2$, we exhibit a null-homotopy; by spinning this homotopy, we get a proof for the $n>2$ case.

  The  null-homotopy is constructed in two stages.  First, note that the space of long $2$-knots in $\rr^5$ is connected \cite{budney:embeddings}.  Further, as noted in \cite[Definition 1]{budney:embeddings}, the space of long $2$-knots in $\rr^5$ is homotopy equivalent to the space of embeddings of $D^2$ into $D^5$ that agree with a fixed linear function on the boundary.  Thus,  there is a smooth isotopy of the left lobe of $\leg^{2,r}$ that satisfies the following:
\begin{enumerate}
\item It fixes the attaching region of the $0$-surgery  joining the left to the right lobes;
\item It is supported in the left half-space of $\rr^5$; and 
\item It takes the left lobe to a flying saucer.
\end{enumerate}
Performing this isotopy on the left lobe and its rotation on the right, we obtain a smooth isotopy $H$ that takes $\leg^{2,r}$ down to a flying saucer; note that this isotopy is symmetric about the $z$ axis.  

\begin{figure}
  \centerline{\includegraphics{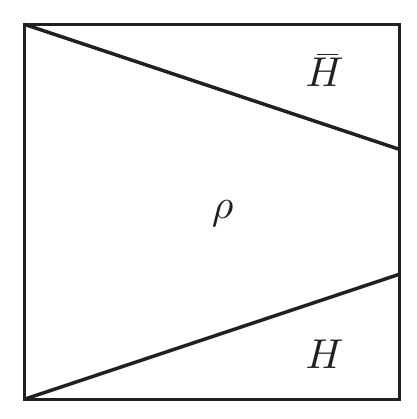}}
  \caption{A schematic picture of the first part of the homotopy between $\rho$ and the constant loop in $\sleg^{2}$.}
  \label{fig:homotopy}
\end{figure}

We are now ready for the first stage of the homotopy $\Theta: [0,2] \to \sleg^2$ that connects $\rho$ to the identity. We work entirely with the front diagram.  At time $t=0$, we simply take $\Theta$ to be $\rho$.  As $t$ increases to $1$, for each fixed $t$, we perform $H(x,3s)$ to gradually transform $\leg^{2,r}$ into the flying saucer over $s \in [0,\frac{t}{3}]$, then rotate the result by $\pi$, and then perform the reverse homotopy $H(x, 3(1-s))$ for $s \in [1-\frac{t}{3},1]$. See Figure~\ref{fig:homotopy} for a schematic picture of this construction.  At $t=1$, the loop $\rho$ has been transformed into a loop that starts by doing $H$ over $[0,\frac{1}{3}]$, then fixes the flying saucer over $[\frac{1}{3},\frac{2}{3}]$, and then undoes $H$ over $[\frac{2}{3},1]$.  This loop is clearly null-homotopic, and we append this null homotopy to the homotopy constructed above.
\end{proof}

Propositions~\ref{prop:pikL} and \ref{prop:pi1L-smooth} together imply Theorem~\ref{thm:non-trivial}.

\begin{rem}
  The proof above shows that the element $\hat{\rho} \in \pi_1(\sleg^n, \leg^{n,r})$ has order at least $2$.  We can modify the construction to produce elements $\hat{\rho}_m \in \pi_1(\sleg^n, \leg^{n,r})$ that have order at least $m$ for any $m>1$.  Instead of connecting two copies of $\tilde{\leg}^{2,r}$ with a $0$-surgery, we begin with a central flying saucer centered on the $z$ axis.  We then take $m$ copies of $\tilde{\leg}^{n,r}$, arrayed as in Figure~\ref{fig:surface-ex-higher-order}, and let $\rho^{m,r}$ be a rotation about the $z$ axis by $\frac{2\pi}{m}$.  The computations of the generating family homology have the same form as those for $\leg^{n,r}$, and a slight generalization of the proof of Proposition~\ref{prop:pikL} shows that all powers $\rho^{m,r}, (\rho^{m,r})^2, \ldots, (\rho^{m,r})^{m-1}$ are nontrivial maps.

\begin{figure}
  \centerline{\includegraphics{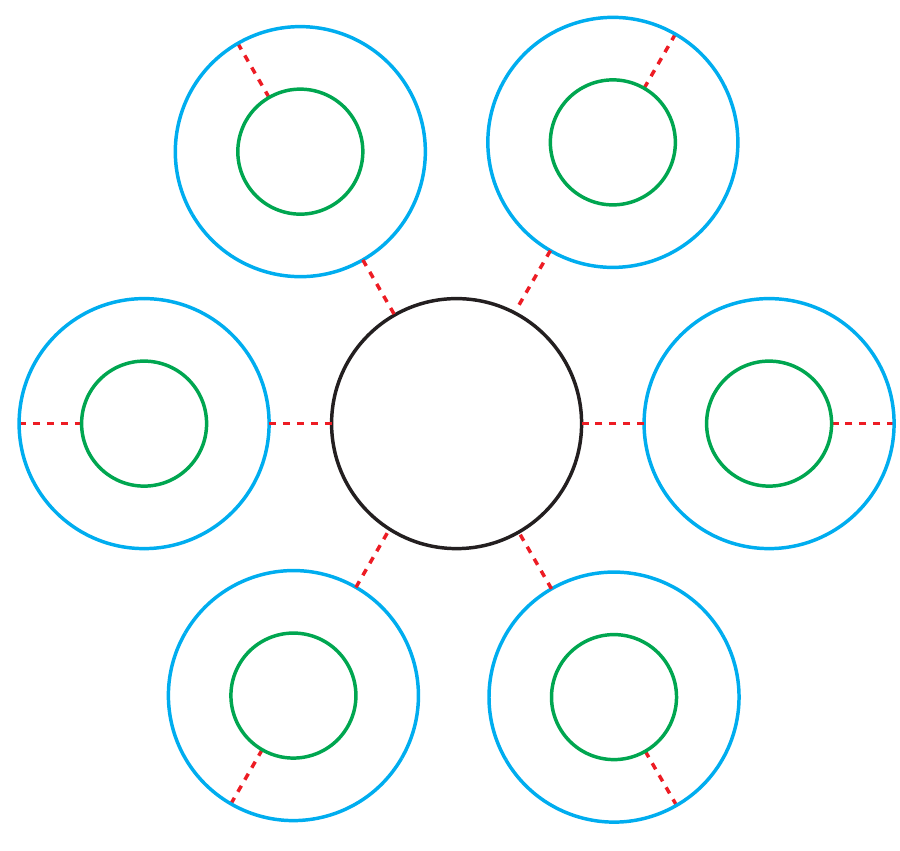}}
  \caption{The fundamental group of $\sleg^2$ based at this surface has an element of order at least 6.}
  \label{fig:surface-ex-higher-order}
\end{figure}

In fact, the argument above shows that for any subgroup $G < SO(n)$ that acts transitively and without fixed points on a set $S \subset S^{n-1}$, there exists an $n$-dimensional Legendrian submanifold $\leg_G \subset \rr^{2n+1}$ and an injection $G \hookrightarrow \pi_1(\sleg^n, \leg_G)$.
\end{rem}

\subsection{Free homotopies}
\label{ssec:free-homotopies}

One can also consider relative versions of the discussion of the map $\Psi$: instead of $m$-spheres of Legendrians up to basepoint-preserving homotopy, consider $m$-cubes of Legendrians up to homotopy relative to their boundary.
One way to algebraically package this, before passing to homology, is as a \dfn{fundamental $\infty$-groupoid}, which we sketch below. This groupoid is an example of a so-called $(\infty,0)$-category.  Essentially, an $(\infty,0)$-category
is a category with objects, 1-morphisms between objects,
2-morphisms between 1-morphisms, etc.
The ``$(\cdot, 0)$''-label 
indicates that all $k$-morphisms for $ k>0$ have homotopy inverses.
The  $``(\infty,\cdot)"$-label indicates that
operations  and relations, such as the composition of two composable 1-morphisms and associativity of composition, only hold up to ``homotopy."  
For a rigorous definition of an $(\infty,0)$-category
in terms of Kan complexes and simplicial sets, see \cite[Remark 1.1.2.3 and Example 1.1.2.5]{lu:higher-topos}

\begin{ex}
\label{ex:fundamental-goupoid}
As mentioned, an example of an $(\infty,0)$-category is $\pi_{\le \infty}(X),$ the {fundamental $\infty$-groupoid} of a topological space $X.$ 
The objects of $\pi_{\le \infty}(X)$ are the points in $X.$ 
The 1-morphisms $Mor_1(x,y)$ are the (possibly empty set of) paths from $x$ to $y.$ 
Composition of composable 1-morphisms is concatenation of paths. Note that we are unconcerned with how to parameterize the composite path since all choices are homotopic. This leads to 
the 2-morphisms $Mor_2(\alpha, \beta)$  between paths $\alpha, \beta$ which start and end at $x,y \in X:$ they are the based homotopies connecting $\alpha, \beta.$
Note that all $(\ge 1)$-morphisms have homotopy inverses.
\end{ex}

\begin{ex}
\label{ex:GH-infinity-category} We define another $(\infty,0)$-category, $\mathcal{GH}(\sleg_n(J^1M)),$ based on the generating family chain complexes of points in $\sleg_n(J^1M).$
The objects are $GC_*(\Z):= GC_*( f)$ with differentials $d = d(\Z).$
Note if $GC_*(\Z) = GC_*(\Z'),$ but the Legendrians $f$ and $f'$ generate are not the same,
the chain complexes are considered the same object in this category.
Given a Legendrian isotopy $\leg_b,$ $ -1 \le b \le 1$ which is constant for $-1 \le b \le 0,$ let $\Z$ be the admissible family associated to the trace $\leg.$ 
(See Section \ref{ssec:main-result}.)
Define a 1-morphisms 
\[
\alpha = \alpha(\Z) \in 
Mor_1(GC_*( f_{-1}), GC_*(f_{1})),
\quad 
\alpha(x) :=\left\langle  d_{1}({\bf{0}},x), {\bf{1}} \right\rangle.
\]
(using the notation of the proof of Proposition \ref{prop:pi_k-endo}).
Note that when defining $Mor_1(GC_*(\Z), GC_*(\Z'))$, we are considering all families  $\Z[-1,1]$
between {\em{all}} pairs
 $\Z$ and $\Z'$ (as in the proof of Proposition \ref{prop:pi_k-endo}) such that
$GC_*( \Z) = GC_*$ and $GC_*(\Z') = GC_*'$.
We continue in this manner, defining the 2-morphisms with the $d_2$-map, et cetera.
\end{ex}

\begin{prop}
\label{prop:fundamental-groupoid}
There is a functor from $\pi_{\le \infty}(\sleg_n(J^1M))$ to $\mathcal{GH}(\sleg_n(J^1M)).$
\end{prop}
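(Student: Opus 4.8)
The plan is to construct the functor, call it $\mathcal{P}$, one categorical level at a time: use the Serre fibration $p$ of Theorem~\ref{thm:serre} to turn families of Legendrians into families of generating families, and use the base-index components $d_k$ of the Hutchings differential of the resulting admissible family (as in Lemma~\ref{lem:trace} and the proof of Proposition~\ref{prop:pi_k-endo}) to produce the $k$-morphisms. Throughout, I would model $\pi_{\le\infty}(\sleg_n(J^1M))$ by its cubical singular complex, so that an $m$-cell is a smooth $I^m$-family $\sigma\colon I^m \to \sleg_n(J^1M)$; this is the model compatible with Examples~\ref{ex:based-homotopy} and \ref{ex:GH-infinity-category} and with the base-with-boundary machinery of Section~\ref{sec:spectral-sequence}.

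On $0$-cells, fix once and for all, for each Legendrian $\leg$ admitting a generating family, a linear-at-infinity generating family $f_\leg$, and set $\mathcal{P}(\leg) = (GC_*(f_\leg), d)$; no continuity is needed, since the $0$-cells form a discrete set. On an $m$-cell $\sigma$, first homotope $\sigma$ rel a collar of $\partial I^m$ so that the constancy hypotheses of Example~\ref{ex:based-homotopy} hold, then lift $\sigma$ through $p$ — using the homotopy lifting property of Theorem~\ref{thm:serre}, starting from the chosen generating family over the initial vertex, and invoking uniqueness of the lift — to a smooth family of generating families over $I^m$, fiber-stabilizing if necessary so that the family is linear at infinity. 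Trace this family to an $(n+m)$-dimensional Legendrian $\leg_\sigma$ as in Equation~(\ref{eq:family-generating-family}), equip $I^m$ with the Morse function $F^{I^m}$ of Equation~(\ref{eq:base-function}), and perturb $V$ to an admissible $\Z_\sigma = (Z \to I^m, \delta, F^{I^m}, V)$ — this is exactly the situation for which Proposition~\ref{prop:moduli_space} and Theorem~\ref{thm:MainPrinciple} were proved over a base with boundary. Define $\mathcal{P}(\sigma)$ to be the $m$-morphism read off from $d_m(\Z_\sigma)$ via the recipe begun in Example~\ref{ex:GH-infinity-category}, namely $x \mapsto \langle d_m(\mathbf{0},x), \mathbf{1}\rangle$ between the chain complexes attached to the two extreme critical points of $F^{I^m}$; by Lemma~\ref{lem:trace} these are the generating family complexes of the Legendrians at the corresponding vertices. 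Where the endpoint of a Serre lift fails to be the previously fixed $f_\leg$, insert the canonical invariance isomorphism of Theorem~\ref{thm:invariance}; since $\mathcal{GH}$ identifies chain complexes that are literally equal, this correction is needed only finitely often and is itself natural.

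It then remains to check that $\mathcal{P}$ is a functor of $(\infty,0)$-categories in the sense of \cite{lu:higher-topos}. This splits into (i) compatibility with faces and degeneracies — the restriction of $\Z_\sigma$ to a face of $I^m$ must agree, up to admissible homotopy, with the data assigned to that face — which is arranged by always lifting over the top cell and restricting, using uniqueness of Serre lifts and the homotopy-invariance clause of Theorem~\ref{thm:MainPrinciple} to absorb the perturbations needed for admissibility; and (ii) the coherence identities (composition goes to composition, associativity and interchange up to higher morphisms, existence of homotopy inverses). Each identity in (ii) is an instance of the base-index decomposition of $d^2 = 0$ over a cube-with-corners, precisely the computation carried out for $m = 1, 2$ in the proof of Proposition~\ref{prop:pi_k-endo}, now run over the cube obtained by gluing the cells involved; in particular, invertibility of $1$-morphisms is the $m = 1$ assertion of that proposition, provable either by a broken-flowline/continuation argument or by reversing the underlying path of Legendrians.

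The main obstacle is the organizational one: making the auxiliary choices — stabilizations, the base functions $F^{I^m}$, the connections, and the admissibility perturbations of $V$ — coherent along all faces of all cubes simultaneously, and matching the family of relations packaged by $d^2 = 0$ to the exact list of Kan-complex identities required of such a functor. No genuinely new analysis is needed beyond Section~\ref{sec:spectral-sequence} and \cite{hutchings:families}; the content is bookkeeping. A clean way to finesse the coherence problem is to follow Hutchings' own two-step strategy: first build the functor at the level of singular chains in the base, where Hutchings' Propositions~4.1, 4.3 and 4.6 supply the required naturality and homotopy invariance with no Morse-theoretic choices on the base, and only then transport to the Morse model on cubes using the comparison isomorphism of Theorem~\ref{thm:MainPrinciple}, thereby confining all perturbation issues to that already-established comparison.
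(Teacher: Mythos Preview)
Your proposal is correct and follows essentially the same approach as the paper, whose proof is a single sentence stating that the proposition follows from arguments nearly identical to those of Proposition~\ref{prop:pi_k-endo}. You have supplied far more detail than the paper does---in particular the cubical organization, the explicit handling of lifts and coherence, and the two-step singular-then-Morse strategy---but the underlying idea (lift via the Serre fibration, trace to an admissible family over $I^m$, read off $k$-morphisms from $d_k$, and verify coherence from $d^2=0$) is the same one the paper intends.
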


\begin{proof}
The proposition follows from almost identical arguments to the proof of Proposition \ref{prop:pi_k-endo}.
\end{proof}

\section{Further Applications}
\label{sec:computations}

In this section, we examine several explicit constructions of families of Legendrian submanifolds with generating families, teasing out the implications of the families machinery of Section~\ref{sec:spectral-sequence} for each construction.

\subsection{Product Families}
\label{ssec:prod-fam}

Suppose that $\leg \subset J^1M$ is a Legendrian submanifold with generating family $f$.  Given a closed manifold $B$, we may form the \dfn{product family} $\leg \times B \subset J^1(M \times B)$ simply by taking the generating family $f^B$ with fiber $f^B_b = f$.  This construction, together with a choice of a $C^2$-small Morse function $F^B$ on $B$ and a metric $g$ on $M \times \rr^N$, induces a family $(Z \rightarrow B, \delta, F^B, V)$.  We may then use Theorem~\ref{thm:MainPrinciple} to compute the generating family homology of the constant family $f^B$ on the total space $\leg \times B$ using a K\"unneth-type formula.

\begin{prop} \label{prop:kunneth}
The generating family homology of the total space of a product family may be computed by:
$$GH_k(f^B) = \bigoplus_{l=0}^{\dim B} GH_l(f) \otimes H_{k-l}(B).$$
\end{prop}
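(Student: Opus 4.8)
The plan is to apply the families machinery of Theorem~\ref{thm:MainPrinciple} to the product family $(Z \to B, \delta, F^B, V)$, where the key point is that this family is Morse--Smale in every fiber (since $\delta_b = \delta$ is a fixed difference function, which we may take to be Morse--Smale with respect to a fixed fiberwise metric $g^Z_b = g$), so the \textbf{Triviality} clause of Theorem~\ref{thm:MainPrinciple} applies and the spectral sequence collapses at $E^2$. Hence $\bigoplus_{i+j = \nu} E^\infty_{i,j}(\Z) = \bigoplus_{i+j=\nu} E^2_{i,j}(\Z) = \bigoplus_{i+j=\nu} H_i(\F_j(\Z))$ by the \textbf{$E^2$ term} clause. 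The first step, then, is to check that the product family genuinely satisfies the hypotheses needed to invoke these clauses: the genericity and linear-at-infinity conditions hold trivially because $\delta_b$ does not depend on $b$ and $f$ is assumed linear at infinity; admissibility (transversality of stable/unstable manifolds of zeroes of $V = \xi + W$) follows from choosing $F^B$ and $g^B$ Morse--Smale on $B$, since with a product connection the flow lines of $V$ split as a product of a base flow line and a fiber flow line, and transversality of each factor gives transversality of the zeroes of $V$.

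Second, I would identify the local coefficient system $\F_j(\Z)$. Because the family of Legendrians is the constant family $\leg_b = \leg$ with the constant generating family $f_b = f$, the locally constant sheaf $\F_j(\Z)$ has stalk $GH_j(f)$ and \emph{trivial} monodromy: the isomorphisms $\Phi_{f_s}$ from Theorem~\ref{thm:invariance} associated to constant paths are the identity. Therefore $H_i(\F_j(\Z)) = H_i(B; GH_j(f)) = H_i(B; \zz/2) \otimes GH_j(f)$, using that we work over the field $\zz/2$ so there are no Tor terms and the universal coefficient theorem gives a clean tensor decomposition. Combining with Lemma~\ref{lem:trace} applied to the trace $\leg \times B$ of this product family — which identifies $GH_k(f^B)$ with $\bigoplus_{i+j = k+N+1} E^\infty_{i,j}(\Z)$ — and reindexing ($j = l + N + 1$, so that the fiber grading shift built into $GH$ is accounted for) yields
\[
GH_k(f^B) = \bigoplus_{i + l = k} H_i(B; \zz/2) \otimes GH_l(f) = \bigoplus_{l=0}^{\dim B} GH_l(f) \otimes H_{k-l}(B),
\]
which is the claimed formula (with the implicit $\zz/2$ coefficients on $H_*(B)$ matching the convention used throughout for generating family homology).

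The main obstacle I anticipate is purely bookkeeping rather than conceptual: tracking the degree shift conventions so that the Morse index grading of the difference function's spectral sequence (which lives in degrees $N+1+k$) lines up correctly with the topological grading of $GH_*$, and confirming that the $F^B$-perturbation does not disturb the level-$\epsilon$ sublevel set — but this is exactly the content already handled in the proof of Lemma~\ref{lem:trace}, so it can be cited rather than redone. A secondary point worth a sentence is verifying that the $C^2$-smallness of $F^B$ keeps $\delta + F^B$ Morse with no new critical values crossing $\epsilon$; again this is the same observation as in Lemma~\ref{lem:trace}. So the proof is essentially: invoke admissibility, invoke Triviality to collapse at $E^2$, compute $E^2$ via the $E^2$-term clause with trivial coefficients, and apply Lemma~\ref{lem:trace} to translate back to $GH_*(f^B)$.
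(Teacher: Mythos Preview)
Your proposal is correct and follows essentially the same approach as the paper: apply the \textbf{$E^2$ term} clause of Theorem~\ref{thm:MainPrinciple} to get $E^2_{i,j} = H_i(B; GH_j(f))$, then invoke the \textbf{Triviality} clause to collapse at $E^2$. The paper's proof is two sentences; you have simply fleshed out the details (admissibility, triviality of the local system, the degree shift via Lemma~\ref{lem:trace}) that the paper leaves implicit.
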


\begin{proof}
The $E^2$ property of Theorem~\ref{thm:MainPrinciple}  implies that 
\[
	E^2_{i,j} = H_i(B; GH_j(f)).
\]
The triviality property of Theorem~\ref{thm:MainPrinciple} implies that the  spectral sequence $E^*_{*,*}$ collapses at the $E^2$ page, and we recover the generating family homology of the family $f^B$ as in the statement of the theorem.
\end{proof}

\begin{cor}
  Suppose that the  Legendrian submanifolds $\leg_1, \leg_2 \subset J^1M$ have different sets of generating family homologies. If $B$ is any closed manifold, then $\leg_1 \times B$ and $\leg_2 \times B$ are not Legendrian isotopic in $J^1(M \times B).$
\end{cor}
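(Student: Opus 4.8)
The plan is to argue by contradiction, using Proposition~\ref{prop:kunneth} together with the invariance of the set of generating family homologies under Legendrian isotopy (the consequence of Theorems~\ref{thm:invariance} and~\ref{thm:serre} noted at the end of Section~\ref{ssec:gfh}). Suppose $\leg_1 \times B$ and $\leg_2 \times B$ were Legendrian isotopic in $J^1(M \times B)$. Then, by that invariance statement, they would have the same set of generating family homologies. The idea is to show this forces $\leg_1$ and $\leg_2$ to have the same set of generating family homologies, contradicting the hypothesis.

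The main work is the following: first I would observe that every generating family for $\leg_1 \times B$ that is of product form $f^B$ with $f^B_b = f$ a generating family for $\leg_1$ has generating family homology computed by the K\"unneth formula of Proposition~\ref{prop:kunneth}, namely $GH_k(f^B) = \bigoplus_l GH_l(f)\otimes H_{k-l}(B)$. So as $f$ ranges over $\F_{\leg_1}$, the product families $f^B$ realize $\{\,\bigoplus_l GH_l(f)\otimes H_{k-l}(B) : GH_*(f) \in \text{(GF homologies of }\leg_1)\,\}$ inside the set of GF homologies of $\leg_1 \times B$; similarly for $\leg_2$. If the two total spaces are Legendrian isotopic, these two collections of $H_*(B;\zz/2)$-modules coincide. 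The second, and key, step is to \emph{cancel the $B$ factor}: from the equality of sets $\{\,GH_*(f)\otimes H_*(B)\,\}$ for $\leg_1$ and $\leg_2$ (as graded $\zz/2$-vector spaces, with the K\"unneth grading shift), deduce equality of the sets $\{\,GH_*(f)\,\}$ themselves. Over the field $\zz/2$ this is clean: tensoring the graded Poincar\'e series of $GH_*(f)$ by the fixed nonzero polynomial $P_B(t) = \sum_j \dim H_j(B;\zz/2)\, t^j$ is injective on formal power series (as $\zz/2[[t]]$ is an integral domain and $P_B(t)\ne 0$), so the Poincar\'e series of $GH_*(f)$ is determined by that of $GH_*(f^B)$; hence the set of Poincar\'e series, and therefore the set of graded groups $GH_*(f)$, for $\leg_1$ and $\leg_2$ agree.

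There is a subtlety to address: a Legendrian isotopy from $\leg_1\times B$ to $\leg_2 \times B$ need not carry product generating families to product generating families, so a priori the matching of GF homologies is between an arbitrary generating family of one and an arbitrary generating family of the other. The point is that the invariance statement at the end of Section~\ref{ssec:gfh} asserts equality of the \emph{sets} of generating family homologies, so it suffices to compare sets; I only need that each product GF homology of $\leg_i\times B$ lies in that common set, which Proposition~\ref{prop:kunneth} guarantees, and that running this over all $f\in\F_{\leg_i}$ recovers enough to pin down the set $\{GH_*(f)\}$ after dividing by $P_B(t)$. So the argument is: the common set of GF homologies contains $\{GH_*(f)\otimes H_*(B)\}_{f\in\F_{\leg_1}}$ and equals, when restricted appropriately, $\{GH_*(f)\otimes H_*(B)\}_{f\in\F_{\leg_2}}$; but one cannot in general isolate the product families inside the full set.

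To circumvent this last gap cleanly, the cleanest route is instead the following direct one, which I would actually write: if $\leg_1\times B$ and $\leg_2\times B$ are Legendrian isotopic, then the Legendrian isotopy $\leg_1\times B \rightsquigarrow \leg_2\times B$ can be lifted (using the Serre fibration $p$ of Theorem~\ref{thm:serre}) starting from a product generating family $f_1^B$ for $\leg_1\times B$; by Theorem~\ref{thm:invariance} the endpoint is a generating family $g$ for $\leg_2\times B$ with $GH_*(g) \cong GH_*(f_1^B) = \bigoplus_l GH_l(f_1)\otimes H_{*-l}(B)$. Since $g\in\F_{\leg_2\times B}$, the Poincar\'e series of $GH_*(g)$ is one of the GF-homology Poincar\'e series of $\leg_2\times B$. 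Now I also start the lift from a product generating family $f_2^B$ for $\leg_2$ (running the isotopy backwards), obtaining a generating family $h$ for $\leg_1\times B$ with Poincar\'e series $P_B(t)\cdot(\text{series of }GH_*(f_2))$. Dividing by $P_B(t)$ in $\zz/2[[t]]$, I conclude that the series of $GH_*(f_1)$ appears among the GF homologies of $\leg_2$ and the series of $GH_*(f_2)$ appears among those of $\leg_1$; doing this for every choice of product family on each side shows the two sets of GF-homology Poincar\'e series coincide, hence so do the sets of graded groups $GH_*(\cdot)$ (being finite-dimensional $\zz/2$-vector spaces, they are determined by their Poincar\'e series), contradicting the hypothesis that $\leg_1$ and $\leg_2$ have different sets of generating family homologies. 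The main obstacle, then, is purely the bookkeeping of ``dividing out $B$''; the topological input is entirely contained in the cited invariance results and Proposition~\ref{prop:kunneth}.
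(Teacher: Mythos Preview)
The paper states this result as an unproved corollary of Proposition~\ref{prop:kunneth}. You correctly identify the two ingredients (the K\"unneth formula and invariance of the set of generating family homologies) and, commendably, you also flag a genuine subtlety that the paper does not address: a Legendrian isotopy from $\leg_1 \times B$ to $\leg_2 \times B$ need not carry product generating families to product generating families, so equality of the full sets of generating family homologies of the products does not obviously force equality of the product-type subsets.

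Your proposed fix, however, does not close this gap. After transporting $f_1^B$ along the isotopy you obtain some $g \in \F_{\leg_2 \times B}$ with $GH_*(g) \cong GH_*(f_1) \otimes H_*(B)$, and then you ``divide by $P_B(t)$'' to conclude that the Poincar\'e series of $GH_*(f_1)$ appears among the generating family homologies of $\leg_2$. But the division only recovers $P_{GH_*(f_1)}(t)$ as a formal power series; it does not exhibit that series as the Poincar\'e series of any $GH_*(f')$ with $f' \in \F_{\leg_2}$. For that conclusion you would need $GH_*(g)$ to factor as $GH_*(f') \otimes H_*(B)$ for some such $f'$---precisely the product structure on $g$ that you already conceded cannot be assumed. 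Nothing in the argument produces an $f'$. (One could try to manufacture one by restricting $g$ to a slice $\{b_0\} \times M \times \rr^N$, so that $g$ becomes a $B$-family of generating families for the constant Legendrian $\leg_2$ as in Lemma~\ref{lem:trace}; but the spectral sequence of Theorem~\ref{thm:MainPrinciple} for this family need not degenerate at $E^2$, and its coefficient system $\F_*(\Z)$ need not be trivial, so $GH_*(g) \cong GH_*(g_{b_0}) \otimes H_*(B)$ is still not guaranteed.) The key step is therefore circular: you use exactly the hypothesis on $g$ that your own earlier paragraph correctly points out is unavailable.
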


While the result of this corollary has been obtained when $M = \rr^n$ and $B$ is the $k$-torus \cite{high-d-duality}, this is a new result for all other cases.

To see an application of the corollary, one may take any pair of twist knots in $J^1\rr$ that Chekanov distinguished using linearized Legendrian contact homology \cite{chv}.  In this case, since the twist knots have only one possible linearized contact homology group, it is easy to use Fuchs and Rutherford's results in \cite{f-r} to show that Chekanov's twist knots have different generating family homology.

\begin{rem}
	The product families construction is a special case of Lambert-Cole's Legendrian product construction \cite{lambert-cole:products}.  The $1$-jet of $F^B$ in $J^1B$ is a Legendrian $\leg_B$ isotopic to the zero section, and the product above is then Lambert-Cole's Legendrian product $\leg \times \leg_B$.
\end{rem}


\subsection{Front Spinning}
\label{ssec:spin}

In the next few subsections, we bring the front spinning constructions of \cite{ees:high-d-geometry, golovko:higher-spin}, their adaptation to generating families \cite{bst:construct}, and their generalization to twist spinning \cite{bst:construct} into the families context.  

For the simplest version of this construction, suppose that a Legendrian submanifold $\leg \subset \rr^{2n+1}$ is contained in the half-space $H$ defined by  $x_n > 1$.  
This can always be achieved via a translation in the $x_n$ direction, which is a Legendrian isotopy.
Suppose further that $\leg$ has a linear-at-infinity generating family $f$ whose support (Section \ref{ssec:gfh})  also lies in the half-space $H$. 
As alluded to in Section \ref{ssec:gfh},
we may also assume that $\delta$ is linear-at-infinity and has support in the half-space $H$ --- in fact, we assume that the support lies in the set defined by $x_n>1$; see \cite{josh-lisa:obstr}.  

We define a new generating family for an $(n+m)$-dimensional Legendrian in $\rr^{2(n+m)+1}$ as follows:  let $(\rho, \boldsymbol{\theta})$ denote generalized spherical coordinates on $\rr^{m+1}$; hence, we may represent a point in $\rr^{n+m} = \rr^{n-1} \times \rr^{m+1}$ by 
$(x_1, \ldots, x_n, \rho, \boldsymbol{\theta})$.  Define the generating family for the spun Legendrian by:
\begin{equation} \label{eq:spin}
f_{\Sigma,m}(x_1, \ldots, x_{n-1}, \rho, \boldsymbol{\theta}, \eta) = f(x_1, \ldots, x_{n-1},\rho, \eta). 
\end{equation}

It is straightforward to check, as noted in \cite{bst:construct}, that $f_\Sigma$ is still a generating family.  We call the new Legendrian the \dfn{$m$-spinning} of $\leg$ and denote it
by $\Sigma^m \leg;$ it clearly has the diffeomorphism type of $\leg \times S^m$.

A small generalization of the proof of Proposition~\ref{prop:kunneth} yields:

\begin{prop} \label{prop:spin}
The generating family homology of the $m$-spun generating family $f_{\Sigma,m}$ may be computed as:
$$GH_k(f_{\Sigma,m}) = GH_k(f) \oplus GH_{k-m}(f).$$
\end{prop}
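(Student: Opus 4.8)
The plan is to recognize $f_{\Sigma,m}$ as carrying, on the part of its domain where its Morse theory lives, the \emph{constant} family of difference functions $\{\delta_b = \delta\}_{b\in S^m}$, and then to run the argument of Proposition~\ref{prop:kunneth} with base $B = S^m$. The only new point is that $\Sigma^m\leg$ sits in $J^1(\rr^{n+m})$ rather than in a product $1$-jet space $J^1(\rr^n\times S^m)$, so the product structure has to be extracted from the defining formula (\ref{eq:spin}) instead of being handed to us.

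First I would write down the difference function. Abbreviating $x' = (x_1,\dots,x_{n-1})$ and using the generalized spherical coordinates $(\rho,\boldsymbol\theta)$ on $\rr^{m+1}$ from Section~\ref{ssec:spin}, formula (\ref{eq:spin}) gives
\[
\delta_{\Sigma,m}(x',\rho,\boldsymbol\theta,\eta,\te) = f(x',\rho,\eta) - f(x',\rho,\te) = \delta(x',\rho,\eta,\te),
\]
so $\delta_{\Sigma,m}$ is independent of $\boldsymbol\theta$ and is just $\delta$ pulled back along $(x',\rho,\boldsymbol\theta)\mapsto(x',\rho)$. Since the support of $\delta$ (hence of $\delta_{\Sigma,m}$) lies in $\{x_n>1\}=\{\rho>1\}$, the spherical coordinates are genuine coordinates there, and on $\{\rho>0\}$ the obvious rearrangement of factors identifies $\rr^{n-1}\times(0,\infty)_\rho\times S^m\times\rr^N\times\rr^N$ with $S^m\times(\rr^{n-1}\times(0,\infty)_\rho\times\rr^N\times\rr^N)$ in such a way that $\delta_{\Sigma,m}$ becomes the constant family over $S^m$ with fiber $\delta$. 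Because $\delta_{\Sigma,m}$ is linear at infinity with compactly supported part contained in $\{\rho>1\}$, every critical point of positive critical value lies in this product region, and --- the groups $GH_*$ being independent of the choices of $\omega$ and $\epsilon$ by \cite[\S3]{josh-lisa:obstr} --- the pair of sublevel sets computing $GH_*(f_{\Sigma,m})$ may be taken inside it as well.

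Next I would assemble the families datum exactly as in the product-family case of Section~\ref{ssec:prod-fam}: a $C^2$-small Morse function $F^{S^m}$ on the base, a fiberwise metric making $(\delta,\xi)$ Morse--Smale together with any metric on $S^m$, and the vector field $V = \xi + W$ of (\ref{eq:gradient-field}), perturbed if necessary so that $\Z = (Z\to S^m,\delta,F^{S^m},V)$ is admissible. The local coefficient system $\F_j(\Z)$ is constant with stalk $GH_j(f)$, since the family is constant, so the $E^2$ clause of Theorem~\ref{thm:MainPrinciple} gives $E^2_{i,j} = H_i(S^m;GH_j(f))$, which equals $GH_j(f)$ for $i\in\{0,m\}$ and vanishes otherwise; and since $(\delta_b,\xi_b)=(\delta,\xi)$ is Morse--Smale for all $b$, the triviality clause forces collapse at $E^2$. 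Reading off the total homology as in Proposition~\ref{prop:kunneth} then yields
\[
GH_k(f_{\Sigma,m}) = GH_k(f)\otimes H_0(S^m)\ \oplus\ GH_{k-m}(f)\otimes H_m(S^m) = GH_k(f)\oplus GH_{k-m}(f).
\]

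I expect the main obstacle to be the bookkeeping in the first step: checking that nothing essential is lost in passing from the non-product domain $\rr^{n+m}\times\rr^N\times\rr^N$ to the product region $\{\rho>0\}$, and that the flowlines of $V$ counted by the differentials cannot escape it. Both follow from the linear-at-infinity hypothesis on $\delta$ together with the Palais--Smale argument already invoked in the proof of Proposition~\ref{prop:moduli_space}: outside a compact subset of $\{\rho>1\}$ the function $\delta_{\Sigma,m}$ agrees with a fixed nonzero linear function on the fibers, so it has no critical points there and its negative gradient flow is complete and drives everything back toward the compact part. (One could instead bypass the spectral sequence and observe directly that $(\delta_{\Sigma,m}^\omega,\delta_{\Sigma,m}^\epsilon)$ is homotopy equivalent to $(\delta^\omega,\delta^\epsilon)\times S^m$ and invoke the Künneth theorem over $\zz/2$; but phrasing the proof through Theorem~\ref{thm:MainPrinciple} keeps it uniform with the rest of Section~\ref{sec:computations}.)
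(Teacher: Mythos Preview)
Your approach is correct and follows the same arc as the paper's: reduce the spun difference function to the constant $S^m$-family of difference functions, then apply the K\"unneth computation of Proposition~\ref{prop:kunneth}. The only real difference is in how the reduction step is justified. The paper handles the failure of the spherical coordinates at $\rho=0$ by a relative Mayer--Vietoris argument on the sublevel sets, covering the domain by $A^h=\{\rho<1,\ \delta<h\}$ and $B^h=\{\rho>\tfrac12,\ \delta<h\}$; since $\delta$ is linear on $\rho<1$, both $(A^\omega,A^\epsilon)$ and $(A^\omega\cap B^\omega,A^\epsilon\cap B^\epsilon)$ are acyclic, so $GH_*(f_{\Sigma,m})\cong H_{*+N+1}(B^\omega,B^\epsilon)$, which is literally the product family. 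This is precisely a rigorous version of your parenthetical remark that $(\delta_{\Sigma,m}^\omega,\delta_{\Sigma,m}^\epsilon)$ has the homology of $(\delta^\omega,\delta^\epsilon)\times S^m$. Your main line of argument --- that critical points and flowlines live in $\{\rho>1\}$ so one may compute the Morse complex there --- is morally right but a little softer: the sublevel sets themselves certainly extend into $\rho\le 0$, and ``independence of $\omega,\epsilon$'' does not by itself let you excise a region of the domain. The Mayer--Vietoris (or an explicit excision) is what fills that gap, and you already pointed to it as the alternative route.
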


\begin{proof}
	The proof is structured around a relative Mayer-Vietoris argument in the domain of $\delta_{\Sigma,m}$, where we take the set $A^h$ to consist of points $(x,\rho, \boldsymbol{\theta},\eta) \in \rr^{n+m} \times \rr^{2N}$ with $\rho < 1$ and $\delta < h$ and the set $B^h$ to consist of points with $\rho>\frac{1}{2}$ and $\delta < h$. Since $\delta$ is a linear function for $\rho<1$, we see that the pairs $(A^\omega, A^\epsilon)$ and $(A^\omega \cap B^\omega, A^\epsilon \cap B^\epsilon)$ are both acyclic.  Thus, a Mayer-Vietoris argument shows that $GH_*(f_{\Sigma,m})$ is isomorphic to $H_{*+N+1}(B^\omega,B^\epsilon)$, which, by examination of Equation~\ref{eq:spin}, is precisely	the generating family homology of the product family $\leg \times S^m$ constructed in the previous section. 	
\end{proof}

We conclude, as in the previous section, that if two Legendrians may be distinguished by their generating family homology, then their $m$-spins are so distinguished as well; see \cite[Section 5]{high-d-duality} for a comparable computation for Legendrian Contact Homology when $m=1$.


\subsection{Twist Spinning}

To generalize the spinning construction of Section \ref{ssec:spin}, consider a representative $\alpha$ of an element in $\pi_m(\mathcal{L}^n; \leg)$.  Suppose that $\leg$ has a generating family $f$, and let $f_{\boldsymbol{\theta}}$ denote the lift of $\alpha$ to the set of generating families for $\leg_{\mathbf{\theta}}$ starting at $f$.  If $m=1$, we must explicitly assume that the lifting procedure yields a loop, not just a path, of generating families.  As a common generalization of \cite{bst:construct} and \cite{golovko:higher-spin}, and in parallel to \cite{ek:isotopy-tori} for $m=1$, we define a generating family for the \dfn{twist-spun} Legendrian $(n+m)$-submanifold $\leg_\alpha$ by:
\begin{equation}
f_\alpha(x_1, \ldots, x_{n-1}, \rho, \boldsymbol{\theta}, \eta) = f_{\boldsymbol{\theta}}(x_1, \ldots, x_{n-1},\rho, \eta).
\end{equation}
Front spinning is obviously a special case of twist spinning:  simply twist-spin the constant isotopy.

To compute $GH_*(f_\alpha)$, we return to the setup in Example~\ref{ex:m-sphere}, where the base function $F: S^m \to \rr$ has a maximum at $a \in S^m$, a minimum at $b \in S^m$, and no other critical points.  Theorem~\ref{thm:MainPrinciple} implies that the $E^2$ term of the families spectral sequence for the family $f_{\boldsymbol{\theta}}$ is $GH_*(f) \oplus GH_*(f)[1-m]$ with the differential defined as follows. If $x$ is a generator of $GH_*(f)$, then in the notation of Sections~\ref{sec:spectral-sequence} and \ref{sec:algebra}, the generators of the $E^2$ term are of the form $(a,x)$ and $(b,x)$.  The definition of the map $\Psi$ then implies that the differential is:
\begin{align*}
	d(a,x) &= \begin{cases} 
(b,\Psi_{[\alpha]}(x)+x) & m=1 \\ 
(b, \Psi_{[\alpha]}(x)) & m > 1
 \end{cases} \\
	d(b,x) &= 0.
\end{align*}

\begin{prop}
The generating family homology $GH_*(f_{\alpha})$ is independent of the choice of representative of $\alpha$ and may be computed from the chain complex
$\left(GH_*(f) \oplus GH_*(f)[1-m], d\right)$ described above.
\end{prop}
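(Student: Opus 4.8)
The plan is to establish the proposition in two parts: first, that $GH_*(f_\alpha)$ is computed by the chain complex $\left(GH_*(f) \oplus GH_*(f)[1-m], d\right)$ displayed above; and second, that this output is independent of the choice of representative $\alpha$. For the first part, I would invoke Lemma~\ref{lem:trace}, which identifies $GH_*(f_\alpha)$ with $\bigoplus_{i+j = k+N+1} E^\infty_{i,j}(\Z)$, where $\Z$ is the families tuple built from the lifted family $f_{\boldsymbol{\theta}}$ over the base $S^m$ with the two-critical-point Morse function $F: S^m \to \rr$ of Example~\ref{ex:m-sphere}. Since $F$ has critical points only in base-degrees $0$ and $m$, the $E^2$ page is concentrated in base-columns $l = 0$ and $l = m$, with $E^2_{0,j} = GH_j(f)$ (the minimum $b$) and $E^2_{m,j} = GH_{j}(f)$ shifted appropriately (the maximum $a$); by the $E^2$ property of Theorem~\ref{thm:MainPrinciple}, these are $H_0(S^m; \F_j) \cong GH_j(f)$ and $H_m(S^m; \F_j) \cong GH_j(f)$, giving the total $E^2$-group $GH_*(f) \oplus GH_*(f)[1-m]$. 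The only possibly-nonzero differential on this page jumps $m$ columns, which is exactly $d_m$, and the formula $d_m(a,x) = (b, \langle d_m(a,x), b\rangle)$ together with the definition of $\psi_\rho$ (Equation~(\ref{eq:key-map})) and Proposition~\ref{prop:pi_k-endo} identifies this with $\Psi_{[\alpha]}(x)$ (plus $x$ in the $m=1$ case). After this page the spectral sequence necessarily collapses since there are only two nonzero columns, so $E^\infty$ is the homology of $(GH_*(f) \oplus GH_*(f)[1-m], d)$, proving the computation.

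For the independence statement, I would use the homotopy-invariance property of Theorem~\ref{thm:MainPrinciple} together with part (2) of Proposition~\ref{prop:pi_k-endo}. Concretely, two representatives $\alpha, \alpha'$ of the same class in $\pi_m(\mathcal{L}^n;\leg)$ are joined by a based homotopy; combining Examples~\ref{ex:m-sphere} and \ref{ex:based-homotopy} exactly as in the proof of Proposition~\ref{prop:pi_k-endo}, one builds an admissible family $\Z[-1,1]$ over $[-1,1] \times S^m$ restricting to the two families $\Z, \Z'$ at the endpoints. The homotopy-invariance clause of Theorem~\ref{thm:MainPrinciple} then yields an isomorphism $E^*_{*,*}(\Z) \cong E^*_{*,*}(\Z')$ of spectral sequences, hence in particular an isomorphism of the $E^\infty$ pages, which is $GH_*(f_\alpha) \cong GH_*(f_{\alpha'})$. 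Alternatively, and perhaps more cleanly at the level of the explicit complex, one observes that Proposition~\ref{prop:pi_k-endo}(2) already says $\Psi_{[\alpha]}$ depends only on the homotopy class $[\alpha]$, and hence the differential $d$ on $GH_*(f) \oplus GH_*(f)[1-m]$ depends only on $[\alpha]$; combined with the first part, this immediately gives independence of the representative.

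The main obstacle I anticipate is the bookkeeping around gradings and the $[1-m]$ shift, particularly making sure the identification $E^2_{m,j} \cong GH_j(f)[1-m]$ lands the two summands in the correct total degrees so that the stated differential has the right degree $d: GH_*(f) \to GH_{*+1-m}(f)$. One must be careful that the base-column index $l$ contributes to the filtration grading while the total grading on $GH$ is $N+1$ less than the fiber grading, and that the maximum of $F$ sits in base-index $m$; reconciling these conventions with the bigrading $(i(b;F^B), i(x;\delta^b))$ from Equation~(\ref{eq:chain-complex}) and with the $\End_{1-m}$ convention of Proposition~\ref{prop:pi_k-endo} is the delicate point. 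A secondary point worth a sentence is the $m=1$ case: there $E^2_{0,j}$ and $E^2_{1,j}$ sit in adjacent columns and the collapse and the extra $+x$ term in the differential must be handled via the $m=1$ clause of Equation~(\ref{eq:key-map}) and the multiplicativity statement of Proposition~\ref{prop:pi_k-endo}(3); but since the complex has only two columns the spectral sequence still degenerates after the $d_1$ differential, and invertibility of $\Psi_{[\alpha]}$ in that case guarantees the homology is still well-behaved.
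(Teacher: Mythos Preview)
Your argument has the right shape once you are working with the families spectral sequence over $S^m$, but there is a genuine gap in the very first step. Lemma~\ref{lem:trace} does not apply directly to $f_\alpha$: that lemma computes the generating family homology of the \emph{trace}, whose domain is $B \times M \times \rr^N = S^m \times \rr^n \times \rr^N$, whereas $f_\alpha$ is defined on $\rr^{n+m} \times \rr^N = \rr^{n-1} \times \rr^{m+1} \times \rr^N$ via generalized spherical coordinates $(\rho,\boldsymbol{\theta})$ on the $\rr^{m+1}$ factor. These are genuinely different base manifolds, and the identification of $GH_*(f_\alpha)$ with the $E^\infty$-page of the spectral sequence over $S^m$ is not automatic.

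The paper supplies exactly this missing reduction by declaring the proof ``parallel to that of Proposition~\ref{prop:spin}'': one runs the relative Mayer--Vietoris argument from that proof on the domain of $\delta_\alpha$, decomposing by $\rho < 1$ and $\rho > \tfrac{1}{2}$. Since each $f_{\boldsymbol{\theta}}$ has support in $\{\rho>1\}$, the difference function is linear on the region $\rho<1$, the pairs there are acyclic, and $GH_*(f_\alpha)$ is identified with the relative homology over $\{\rho > \tfrac{1}{2}\} \cong S^m \times (\tfrac{1}{2},\infty) \times \rr^{n-1}$. Only at that point does the family $f_{\boldsymbol{\theta}}$ over $S^m$ enter, and only then can Lemma~\ref{lem:trace}, the construction of $\Psi$ in Equation~(\ref{eq:key-map}), and Proposition~\ref{prop:pi_k-endo} be invoked (in place of the role Proposition~\ref{prop:kunneth} played for the constant spin). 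After that reduction your analysis of the $E^2$-page, the identification of the only nonzero differential with $\Psi_{[\alpha]}$, the two-column collapse, and the independence via Proposition~\ref{prop:pi_k-endo}(2) are all fine.
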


\begin{proof}
	The proof is parallel to that of Proposition~\ref{prop:spin}, above, with the construction of $\Psi$ in Equation~(\ref{eq:key-map}) and Proposition~\ref{prop:pi_k-endo} taking the place of Proposition~\ref{prop:kunneth}.
\end{proof}

The theorem above can give us information in two ways:  first, it allows us to use distinct elements of $\pi_m(\leg^n;\leg_0)$ to produce pairs of distinct $(n+m)$-dimensional Legendrian submanifolds.  For example, twist-spinning the Legendrian $\leg$ constructed in Section~\ref{ssec:dumbbell} by the non-trivial element in $\pi_1(\leg^n, \leg)$ yields a Legendrian $(n+1)$-submanifold distinct from the ordinary spin of $\leg$. 

The theorem above also provides a potential mechanism to distinguish elements of $\pi_m(\mathcal{L}^n)$: if the twist-spins of two loops of Legendrian with a common base point have different generating family homology, then the difference must have arisen from the $\Psi$ maps.  Thus, if one can compute the generating family homology by some other means --- surgery \cite{josh-lisa:obstr} or a generating family version of the Mayer-Vietoris sequence of \cite{hs:bordered-lch}, for example --- then one has a chance of finding new examples of non-trivial elements of $\pi_m(\mathcal{L}^n)$ without directly computing the $\Psi$ maps directly.  Unfortunately, as of this writing, we know of no implementations of this technique.


\subsection{Factoring $\Psi$ Through Spinning}
\label{ssec:factoring}

In this section, we study the relationship between the morphism $\Psi$ from homotopy groups of spaces of Legendrians and the $1$-spinning construction.  
Unlike in Section \ref{ssec:spin}, we need the analyze the chain complex more closely, but
along the way, we reprove Proposition \ref{prop:spin} in the $1$-spun case.

First we adapt a technique useful for gradient flow trees and holomorphic disks in Legendrian Contact Homology \cite{ees:high-d-geometry, hs:bordered-lch} to generating family homology. 
We state the lemma more generally than is needed in this article for possible future applications.
Let $g$ be a metric on $M \times \rr^N \times \rr^N,$ $S \subset M$ be a submanifold, and
 $N_\epsilon(S) \subset M$ be the $\epsilon$-neighborhood of $S.$  
 Let $\delta$ be the difference function of a generating family $f: M \times \rr^N \to \rr.$
Let $V$ be a (negative) gradient-like vector field for $\delta$ used to define the differential in $GC(f).$
Assume the support of $V$ agrees with the support of $\delta.$

\begin{lem}\label{lem:pinching}
For all sufficiently small $\epsilon>0,$ and for all $(x, \eta, \tilde{\eta})$ such that $x \in \partial N_\epsilon(S)$ and $\delta(x, \eta, \tilde{\eta})>0,$ assume one of the following holds:
either the component of $V$ normal to $\partial N_\epsilon(S)$ is non-vanishing and points inwards; or, $(x, \eta, \tilde{\eta})$ is not in the support of $\delta.$   
Fix points $p,q \in M \times \rr^N \times \rr^N$  with $\delta(p) > \delta(q) > 0$ and negative gradient-like flow line $\gamma$ of $\delta$ connecting them.
  \begin{enumerate}
  \item When $S$ is a hypersurface,
  $\gamma$ does not cross $S \times \rr^N \times \rr^N.$ 
  \item If both $p$ and $q$ lie in $S \times \rr^N \times \rr^N$, then $\gamma$ sits entirely in $S\times \rr^N \times \rr^N.$
  \item
  If $f_S$ is the restriction of $f$ to $S \times \rr^N$, then $GC(f_S)$ is naturally a subcomplex of $GC(f).$
  \end{enumerate}
 If we replace  ``inwards'' with ``outwards'' in the first assumption,
then the first and second statements above still hold.
\end{lem}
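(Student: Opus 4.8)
The plan is to treat Lemma~\ref{lem:pinching} as a statement about the behavior of gradient-like flow lines relative to a ``barrier'' hypersurface $\partial N_\epsilon(S) \times \rr^N \times \rr^N$, and to prove each of the three claims in turn. First I would fix $\epsilon>0$ small enough that $N_\epsilon(S)$ is a genuine tubular neighborhood and the hypothesis on $V$ along $\partial N_\epsilon(S)$ holds on the region $\{\delta>0\}$. The key observation is that the flow lines $\gamma$ counted in $GC(f)$ are flow lines of $V$ (a gradient-like vector field for $\delta$), and $\delta$ is strictly decreasing along them; since we only care about $p,q$ with $\delta(p)>\delta(q)>0$, the whole flow line $\gamma$ lies in $\{\delta>0\}$, so the hypothesis applies at every point where $\gamma$ meets $\partial N_\epsilon(S)$.

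For statement (1), with $S$ a hypersurface in $M$, I would argue by a maximum-principle / barrier argument. At any point where $\gamma(t)$ hits $\partial N_\epsilon(S)$, either that point is outside the support of $\delta$ --- in which case $\delta$ is the fixed linear-at-infinity model there and a flow line in that region is a straight half-line that cannot re-enter the support, so it certainly cannot cross $S$ in the compact region --- or the normal component of $V$ points strictly inward (into $N_\epsilon(S)$). In the latter case the function $t \mapsto \mathrm{dist}(\gamma_M(t), S)$ cannot decrease through $0$: once $\gamma$ is at distance $\epsilon$ with the vector field pushing it back in, it stays in $\overline{N_\epsilon(S)}$; shrinking $\epsilon$ to $0$ (more precisely, running the argument for all sufficiently small $\epsilon$) forces $\gamma$ never to cross $S \times \rr^N \times \rr^N$. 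The ``outwards'' variant is symmetric: then $\gamma$ can never \emph{enter} $N_\epsilon(S)$ from outside, and again letting $\epsilon \to 0$ shows $\gamma$ does not cross $S$.

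Statement (2) is then almost immediate from (1) applied to $S$ itself, once one notes that $S \times \rr^N \times \rr^N$ is cut out inside $N_\epsilon(S) \times \rr^N \times \rr^N$ (for $S$ a submanifold of possibly higher codimension) as the common zero locus of the normal coordinates, and that the inward-pointing (resp.\ outward-pointing) hypothesis on each $\partial N_\epsilon(S)$ means the flow is trapped inside $\overline{N_\epsilon(S)}$ for every small $\epsilon$; hence if $p,q \in S\times\rr^N\times\rr^N$ the flow line joining them cannot escape into $N_\epsilon(S)\setminus S$ for any $\epsilon$, so it lies in $S \times \rr^N\times\rr^N$. Here one should also invoke that $f_S = f|_{S\times\rr^N}$ has a well-defined difference function $\delta_S$, and that $V$ restricted to $S\times\rr^N\times\rr^N$ is tangent there and gradient-like for $\delta_S$ --- this uses the assumption that the support of $V$ equals the support of $\delta$, so no spurious transverse component appears. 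For statement (3): by (2), every flow line between two critical points of $\delta$ lying on $S\times\rr^N\times\rr^N$ stays on $S\times\rr^N\times\rr^N$, and the critical points of $\delta$ with positive critical value that lie on $S\times\rr^N\times\rr^N$ are exactly the critical points of $\delta_S$ with positive value; hence the span of those generators is closed under $d$, i.e.\ $GC(f_S)$ is a subcomplex of $GC(f)$, with the grading shift bookkeeping ($N$ vs.\ the fiber dimension of $f_S$) being routine.

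The main obstacle I expect is making the ``let $\epsilon \to 0$'' step fully rigorous: the hypothesis is stated for \emph{all sufficiently small} $\epsilon$, so one must check that a single fixed flow line $\gamma$ (with endpoints $p,q$ not a priori near $S$) is simultaneously constrained by the barrier at every small scale, and handle the interaction with the linear-at-infinity region where $V$ is the model vector field rather than genuinely gradient-like for a compactly-supported perturbation. Phrasing this via the monotone quantity $\mathrm{dist}(\cdot,S)$ and the observation that a flow line of $V$ in the linear-at-infinity region is an explicit straight line should suffice, but it requires care at the boundary between the support and its complement.
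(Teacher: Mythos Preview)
Your approach is essentially the same as the paper's: a barrier argument governed by the inward-pointing hypothesis on $\partial N_\epsilon(S)$, together with the observation that outside the support of $\delta$ the flow stays in a single fiber $\{x\}\times\rr^N\times\rr^N$.  The paper's proof is slightly cleaner in one respect worth noting.  Rather than running a barrier argument at each $\partial N_\epsilon(S)$ and letting $\epsilon\to 0$, the paper first observes that the hypothesis (inward-pointing normal component for \emph{all} small $\epsilon$) forces the normal component of $V$ to vanish along $S\times\rr^N\times\rr^N$ itself, i.e.\ $V$ is tangent there.  Statement~(1) is then immediate from uniqueness of ODE solutions: $S\times\rr^N\times\rr^N$ is an invariant set, so a flow line either lies in it or never touches it.

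This matters because your phrasing for (1) --- ``once $\gamma$ is at distance $\epsilon$ with the vector field pushing it back in, it stays in $\overline{N_\epsilon(S)}$'' --- establishes forward-invariance of $N_\epsilon(S)$, but that alone does not prevent $\gamma$ from crossing $S$: the crossing could occur entirely inside $N_\epsilon(S)$.  What you need is exactly the tangency of $V$ along $S$ (which you do invoke later for~(2)).  Once you extract that consequence up front, the ``let $\epsilon\to 0$'' worry you flag at the end disappears, since the argument no longer depends on a limiting procedure.  Your treatments of (2) and (3) match the paper's.
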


\begin{proof}
Note that if $\gamma$ exits the support of $V,$ it then stays within a single fiber $\{x\} \times \rr^N \times \rr^N$.
Thus, for the first statement, it suffices to observe that the hypotheses imply that $V$ is everywhere tangent to $S \times \rr^{N} \times \rr^N$. 

For the second statement, since the normal component of $V$ always points into $T(S \times \rr^{N} \times \rr^N)$ at $p$, or vanishes, even if $p$ is a critical point of $\delta$, the flow line cannot leave any $\epsilon$ neighborhood of $S \times \rr^N \times \rr^N$. Thus, the first observation implies that $\gamma$ lies entirely in $S \times \rr^{N} \times \rr^N.$  A similar proof, based at $q$, holds if we replace the ``inwards'' assumption by ``outwards''.

For the third statement, note that the vanishing normal component of $V$ along $S \times \rr^{N} \times \rr^N$  implies that there is a one-to-one correspondence between the critical points of $\delta$ and those of $\delta_S$.
The equality of differentials then follows from the argument for the second statement which prevents a flow line from leaving $S \times \rr^{N} \times \rr^N.$
\end{proof}

We now study the interaction of spinning and Proposition \ref{prop:pi_k-endo}.
Fix a Legendrian submanifold $\leg \subset \{\rho:= x_n > 1\} \subset J^1\rr^n$ with generating family $f$
whose support lies in $\{\rho > 1/2\} \subset \rr^{n} \times \rr^N.$
A 1-spin produces a Legendrian $\Sigma^1 {\leg} \subset J^1\rr^{n+1}$ with 
generating family $f_{\Sigma,1}$ as in equation (\ref{eq:spin}).
Choose a smooth monotonic function $\lambda(\rho)$ such that $\lambda|[0,1/2] =0$ and
$\lambda|[1,\infty) = 1.$
Fix a small $\epsilon>0$, and let $V$ be the gradient vector field of the difference function
with a $C^2$-small perturbation:
\[
f_{\Sigma,1}(x_1, \ldots, x_n, \rho, \theta, \eta) - 
f_{\Sigma,1}(x_1, \ldots, x_n, \rho, \theta, \tilde{\eta})
+ \epsilon \lambda(\rho) \sin(\theta).
\]

All critical points of the gradient-like vector field $V$ have  
coordinates $\rho > 1$ and $\theta = -\pi/2$ or $\pi/2,$ 
which we distinguish by labeling as $c[-]$ and $c[+],$ respectively, 
where $c$ is a critical point of the difference function of $f$.
This induces a decomposition of the differential $d_{\Sigma,1}$ of $GC(f_{\Sigma,1}) = GC[-] \oplus GC[+]$:
\[
d_{\Sigma,1} = \begin{bmatrix} d_{--} & d_{-+} \\ d_{+-} & d_{++} \end{bmatrix}
\]
We first prove a lemma which implies Proposition \ref{prop:spin} for the $1$-spin case.

\begin{lem}
\label{lem:matrix-factorizing} For all critical points $b,c$ of the difference function of $f$, we have:
\begin{align*}
d_{-+}c[-] &=0, \\
d_{+-}c[+] &= 0,\\
 \langle d_{--} c[-], b[-] \rangle &= \langle d c, b \rangle  =
 \langle d_{++} c[+], b[+] \rangle,
 \end{align*}
 where $d$ is the differential of $GC(f).$
\end{lem}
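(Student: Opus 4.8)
The plan is to derive all four identities from the negative gradient flow of the perturbed difference function $\Delta := \delta_{\Sigma,1} + \epsilon\lambda(\rho)\sin\theta$, using that $\delta_{\Sigma,1}$ is independent of the spinning angle $\theta$ while the perturbation depends only on $(\rho,\theta)$. First I would fix the auxiliary geometry: take the metric defining $V$ to be a product of the flat metric $d\rho^2+\rho^2\,d\theta^2$ on the $\rr^2$ spinning factor and a fixed metric on the remaining $\rr^{n-1}\times\rr^{2N}$, and arrange (translating $f$ if necessary so that $\lambda\equiv 1$ on its support, and taking the Morse--making perturbation $\theta$-independent) that the restriction of all this data to the loci $\{\theta=\pm\pi/2\}$ is a valid choice of auxiliary data for $GC(f)$ --- legitimate because setting $\theta=\pm\pi/2$ in $f_{\Sigma,1}$ returns $f$ and, as we will see, those loci are flow-invariant. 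Then I would locate the critical points: $\partial_\theta\Delta=\epsilon\lambda(\rho)\cos\theta$ vanishes only where $\lambda=0$ or $\theta\in\{\pm\pi/2\}$, and $\Delta$ is a nonzero linear function of the fiber variables where $\lambda=0$, so for small $\epsilon$ the critical points of $V$ with positive critical value correspond bijectively to pairs (critical point $c$ of the difference function of $f$, sign $\pm$); writing $c[\pm]$ for the one near $\theta=\pm\pi/2$, the fact that $-\pi/2$ (resp.\ $\pi/2$) is a nondegenerate minimum (resp.\ maximum) of $\sin\theta$ gives the index relation $i(c[+])=i(c[-])+1=i(c)+1$ that underlies Proposition~\ref{prop:spin}.

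The engine of the proof is that $\sin\theta$ is a Lyapunov function: along any negative gradient flow line of $\Delta$,
\[
\tfrac{d}{dt}\sin\theta \;=\; -\langle\nabla\sin\theta,\nabla\Delta\rangle \;=\; -\epsilon\,\lambda(\rho)\,|\nabla\sin\theta|^2 \;\le\; 0 ,
\]
since in the product metric $\nabla\sin\theta$ is a multiple of $\partial_\theta$, which is orthogonal to both $\nabla\delta_{\Sigma,1}$ and $\nabla\lambda$ (these being $\theta$-independent); the inequality is strict unless $\theta=\pm\pi/2$ or $\lambda(\rho)=0$, and a flow line joining two critical points cannot enter $\{\lambda=0\}$ because there it would (as in the proof of Lemma~\ref{lem:pinching}) be confined to a single fiber and escape to infinity. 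Three of the four identities follow immediately. A flow line from $c[-]$ to $b[-]$ has $\sin\theta\equiv-1$, hence lies in the flow-invariant slice $\{\theta=-\pi/2\}$, on which $V$ restricts to the negative gradient of $\delta_f$; its rigid instances are therefore exactly those counted by $d$ on $GC(f)$, giving $\langle d_{--}c[-],b[-]\rangle=\langle dc,b\rangle$, and the mirror argument with $\sin\theta\equiv1$ gives $\langle d_{++}c[+],b[+]\rangle=\langle dc,b\rangle$. There is no flow line from $c[-]$ to $b[+]$, since $\sin\theta$ would have to rise from $-1$ to $1$, so that off-diagonal block of $d_{\Sigma,1}$ vanishes. (These are exactly the conclusions of Lemma~\ref{lem:pinching} applied with $S=\{\theta=-\pi/2\}$ under the inward hypothesis and with $S=\{\theta=\pi/2\}$ under the outward one.)

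The one remaining block --- flow lines from $c[+]$ to $b[-]$, where $\sin\theta$ is allowed to fall from $1$ to $-1$ --- is where the argument has genuine content, and I expect it to be the main obstacle. For a rigid such flow line the degree constraint forces $i(c)=i(b)$. If $b\neq c$, the product structure makes the flow line project to a nonconstant negative gradient flow line of $\delta_f$ between critical points of equal index, of which there are none by Morse--Smale transversality, so the moduli space is empty. If $b=c$, the projection to the $f$-factor is forced to be constant, so the flow line lies over $c$ and is governed by the circle flow $\dot\theta=-\epsilon\cos\theta$ running from its maximum $\pi/2$ to its minimum $-\pi/2$; there are exactly two such flow lines, so their count in $\zz/2$ is zero (equivalently, the involution $\theta\mapsto\pi-\theta$ of the spinning coordinates preserves $\Delta$ and the chosen metric and acts freely on this moduli space). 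Hence this block vanishes too, and the four identities assemble to $(GC(f_{\Sigma,1}),d_{\Sigma,1})\cong(GC(f),d)\oplus(GC(f),d)[1]$, reproving Proposition~\ref{prop:spin} in the $1$-spun case. The delicate points in writing this out are all concentrated in that last case and in setting up the product metric and the cut-off $\lambda$ so that the splitting of flow lines and the confinement to $\{\lambda>0\}$ are exact; everything else is a routine transcription of standard Morse theory.
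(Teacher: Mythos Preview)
Your Lyapunov argument for $d_{-+}=0$ and the two diagonal identities is correct and is exactly the mechanism behind the paper's appeal to Lemma~\ref{lem:pinching}: the monotonicity of $\sin\theta$ along the flow is precisely what makes the normal component of $V$ point inward along $\{\theta=-\pi/2\}$ and outward along $\{\theta=\pi/2\}$.

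Where you diverge from the paper is in the treatment of $d_{+-}c[+]=0$. The paper does not split into cases: the reflection $\theta\mapsto\pi-\theta$ (your involution) preserves $\Delta$ and the product metric, fixes every $c[\pm]$, and acts without fixed points on $\M(c[+],b[-])$ for \emph{every} pair $b,c$, so the count over $\zz/2$ vanishes in one stroke. Your separate projection argument for $b\neq c$ has a small gap: the flow of $-\nabla\Delta$ projects to a genuine gradient flow of $\delta_f$ only where the cross term $\epsilon\lambda'(\rho)\sin\theta\,\partial_\rho$ vanishes, and you have only ruled out the region $\{\lambda=0\}$, not the transition strip $\{0<\lambda<1\}$ where $\lambda'\neq 0$. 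This is repairable with your conventions---since $\lambda'(1)=0$ and $\partial_\rho\delta_f|_{\rho=1}=0$ (the support of $\delta_f$ lies in $\{\rho\ge 1\}$), the hypersurface $\{\rho=1\}$ is flow-invariant, so a trajectory between critical points at $\rho>1$ never enters $\{\rho<1\}$, and on $\{\rho>1\}$ the projection does decouple exactly---but once you have the reflection in hand there is no reason not to use it uniformly, as the paper does.
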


\begin{proof}
By the symmetry of $V$ under the reflection through the $x_1 \cdots x_{n-1} z$ plane,
any elements in any
rigid moduli space $\M_0(c[+], b[-])$ 
appear in pairs; thus, $ d_{+-} = 0.$

Let $S\subset \rr^{n-1} \times \rr^2$ be the open hypersurface satisfying $\theta = -\pi/2$ and  $\rho > 1/2.$
We see that the hypotheses (with ``inward'' specification) of Lemma \ref{lem:pinching} hold;
 therefore, the third statement of the lemma implies:
 \[
 d_{-+} = 0, \quad \mbox{and} \quad
 \langle d_{--} c[-], b[-] \rangle =   \langle d c, b \rangle.
 \]

Finally, let  $S'\subset \rr^{n-1} \times \rr^2$ be the hypersurface defined by $\theta = \pi/2$ and $\rho > 1/2.$
The identity $\langle d_{++} c[+], b[+] \rangle =  \langle d c, b \rangle$ now follows from
the second statement of Lemma \ref{lem:pinching} (with the ``outward'' hypothesis).
\end{proof}

\begin{prop}
\label{prop:map-factorizing}
Let $\Psi$ be the map from Proposition \ref{prop:pi_k-endo}.
Let $Pr_\pm$ be the projection map defined on generators as
$$GH(f_{\Sigma,1}) \rightarrow GH(f), \quad
 c[\pm] \rightarrow c, \,\, c[\mp] \rightarrow 0.$$
Define the map $i:\pi_m(\sleg(J^1 \rr^n); \leg) \rightarrow \pi_m(\sleg(J^1\rr^{n+1});  \Sigma^1 {\leg} )$ induced by 1-spinning $S^m$ families of Legendrians.
Then $i$ is well-defined, and 
 $\Psi$ factors through 1-spinning, i.e.\ the following diagram commutes:
\[
\xymatrix{
	\pi_m(\sleg(J^1 \rr^n); \leg) \ar[d]^i \ar[r]^\Psi & \End_{1-m}(GH_*(f)) 
	 \\
	\pi_m(\sleg(J^1\rr^{n+1});  \Sigma^1 {\leg} ) \ar[r]^\Psi & \End_{1-m}(GH_*({f}_{\Sigma,1}))\ar[u]^{Pr_\pm}.
}
\]
\end{prop}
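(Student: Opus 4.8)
The plan is to verify the two independent assertions of the proposition—well-definedness of $i$ and commutativity of the square—by leveraging the structural analysis already carried out in Lemma~\ref{lem:matrix-factorizing}, applied now to families of Legendrians rather than a single one. First I would address $i$: given a smooth $S^m$-family $\rho: S^m \to \sleg(J^1\rr^n)$ based at $\leg$ (with $\rho(U) = \leg$ near the basepoint, as in Example~\ref{ex:m-sphere}), we may translate each Legendrian $\rho(s)$ into the half-space $\{x_n > 1\}$ uniformly in $s$, and, using the Serre fibration of Theorem~\ref{thm:serre}, lift to a family of generating families $f_s$ whose supports lie uniformly in $\{\rho > 1/2\}$. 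Applying the spinning formula~(\ref{eq:spin}) fiberwise produces a smooth $S^m$-family of generating families $f_{s,\Sigma,1}$, hence a smooth $S^m$-family of Legendrians in $J^1\rr^{n+1}$ based at $\Sigma^1\leg$. Homotopies of $\rho$ rel $U$ spin to homotopies rel $U$ (again invoking Theorem~\ref{thm:serre} for the lift), so $i$ descends to a well-defined map on $\pi_m$; one should also check that spinning a loop of generating families yields a loop (not merely a path), which is immediate from~(\ref{eq:spin}) since the fiber coordinate $\eta$ is untouched.

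Next I would set up the trace construction for the spun family. Starting from the $S^m$-family $f_s$ for $\rho$, form the generating family $f^\rho$ on $S^m \times \rr^n \times \rr^N$ for the trace $\leg^\rho \subset J^1(S^m \times \rr^n)$ as in Lemma~\ref{lem:trace}, with base Morse function $F^{S^m}$ having a maximum at $a$ and minimum at $b$. The key observation is that the trace of the \emph{spun} family $i(\rho)$ is obtained from $\leg^\rho$ by a $1$-spinning of the $\rr^n$-factor: that is, $(f^{i(\rho)})_{\Sigma,1}(s, x_1,\dots,x_n,\rho,\theta,\eta) = f^\rho_s(x_1,\dots,x_{n-1},\rho,\eta)$, with the same base function $F^{S^m}$ and the perturbation $\epsilon\lambda(\rho)\sin\theta$ added as in the paragraph preceding Lemma~\ref{lem:matrix-factorizing}. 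The vector field $V$ for the spun trace then splits the fiberwise complex as $GC(f^\rho_s) = GC[-]_s \oplus GC[+]_s$ at each $s \in \Crit(F^{S^m})$, and the gradient flow lines of $V$—which combine the horizontal lift $W$ of $\nabla F^{S^m}$, the fiberwise gradient, and the $\theta$-gradient—respect this splitting because the $\theta$-direction is decoupled from the base $S^m$-direction. Concretely, since the perturbing term $\epsilon\lambda(\rho)\sin\theta$ depends only on the fiber, the pinching argument of Lemma~\ref{lem:pinching} applies verbatim on each slice $\{\theta = -\pi/2\}$ and $\{\theta = \pi/2\}$, now with $S \subset S^m \times \rr^{n-1} \times \rr^2$ a hypersurface, so that flow lines of $V$ between the $c[-]$ generators stay in the $[-]$-slice and likewise for $[+]$.

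With this established, the computation of $\Psi_{i(\rho)}$ reduces to Lemma~\ref{lem:matrix-factorizing} performed "with base parameters": the $d_m$-component of the spun differential, which is what enters the definition~(\ref{eq:key-map}) of $\psi$, satisfies
\[
\langle d_m((a, c[-]), (b, c'[-]))\rangle = \langle d_m((a,c),(b,c'))\rangle = \langle d_m((a,c[+]),(b,c'[+]))\rangle,
\]
while the off-diagonal blocks $d_{-+}$ and $d_{+-}$ vanish for the same symmetry and pinching reasons as before. Therefore $\psi_{i(\rho)}$ is block-diagonal, acting as $\psi_\rho$ on each of $GC[-]$ and $GC[+]$, and post-composing with $Pr_\pm$ recovers $\psi_\rho$ on the nose (at the chain level, hence on homology). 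The main obstacle I anticipate is not any single computation but the bookkeeping needed to guarantee that all the genericity and admissibility choices can be made \emph{compatibly} across the base $S^m$ and the spinning direction simultaneously—i.e.\ that the spun family $\Z$ is admissible and that $V$ can be chosen with the exact decoupled form required for Lemma~\ref{lem:pinching} to apply slice-by-slice without destroying the transversality of stable/unstable manifolds of the base Morse function; this is where one must be careful, perturbing $V$ only within the fiber and only away from the $\theta = \pm\pi/2$ slices, and citing Proposition~\ref{prop:moduli_space} and the admissibility discussion of Section~\ref{sec:spectral-sequence} to conclude.
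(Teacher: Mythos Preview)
Your proposal is correct and follows essentially the same approach as the paper: argue that $i$ is well-defined because spinning a homotopy of $S^m$-families yields a homotopy of spun families, then reduce commutativity to the identity $\langle d^{\Sigma,1}_m c[\pm], b[\pm]\rangle = \langle d_m c, b\rangle$ and establish it by applying Lemma~\ref{lem:pinching} to the hypersurface $\{\theta = \pm\pi/2\}$ in the total base $S^m \times \rr^{n+1}$. The paper is terser---it does not spell out the Serre-fibration lifting or the admissibility bookkeeping you worry about, and it does not bother to assert vanishing of the off-diagonal blocks $d_{-+}, d_{+-}$ at the $d_m$ level (only the diagonal identity is needed for the diagram as stated)---but the skeleton of the argument is the same.
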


\begin{proof}
First note that $i$ is well-defined, since the 1-spin of a homotopy of two Legendrian $S^m$-families is a homotopy of two 1-spun Legendrian $S^m$-families.

Let $d_m$ be the chain map which induces the upper arrow $\Psi$  in the proposition,
and $d^{\Sigma,1}_m$ be the chain map which induces the lower $\Psi,$ both as in
equation (\ref{eq:key-map}).
Using the notation of Lemma \ref{lem:matrix-factorizing}, it suffices to show that: 
\begin{equation}
\label{eq:d_m-on-slice}
\langle d^{\Sigma,1}_m c[-], b[-]\rangle = \langle d_m c, b \rangle = \langle d^{\Sigma,1}_m c[+], b[+]\rangle.
\end{equation}
We prove the first equality, as the second  one follows from identical reasoning.

Let ${\leg}(t),$ $t \in S^m,$
represent an arbitrary element in $\pi_m(\sleg^n; \leg))$ and 
$\Sigma^1 {\leg}(t),$ be its front-spun counterpart.
Recall the $S^m$-family is described in Example \ref{ex:m-sphere}.
For $t \in S^m,$ choose (smoothly in $t$) the half-hyperplane $S(t)$ 
from the proof of Lemma \ref{lem:matrix-factorizing} 
(rotated according to $t$)  which
``cuts out" a copy of $\leg(t)$ from $\Sigma^1 {\leg}(t).$
This defines a hypersurface $S$ in $S^m \times \rr^{n+1}.$
Like in the proof of Lemma \ref{lem:matrix-factorizing}, we see that the hypotheses of Lemma \ref{lem:pinching} are satisfied.
Equation \ref{eq:d_m-on-slice} follows from the second statement of Lemma \ref{lem:pinching}. 
\end{proof}


\bibliographystyle{amsplain} 
\bibliography{main}

\end{document}